\documentclass[11pt,a4paper]{amsart}
\usepackage[dvips]{graphicx}
\usepackage{amsmath}
\usepackage{amsfonts}
\usepackage{amsthm}
\usepackage{amssymb}
\usepackage{mathrsfs}
\usepackage[T1]{fontenc}
\usepackage[latin1]{inputenc}
\usepackage[all]{xy}
\usepackage{a4wide}
\usepackage{psfrag}
\usepackage[bf]{caption}

\newcommand{\finw}{\mathcal{W}}

\newcommand{\Pic}{\mathrm{Pic}\,}

\newcommand{\fine}{\mathcal{E}}
\newcommand{\finevee}{\mathcal{E}^{\vee}}
\newcommand{\finf}{\mathcal{F}}
\newcommand{\finfca}{\mathcal{F}_{C,A}}

\newcommand{\finecax}{\mathcal{E}_{C,A}}
\newcommand{\finecaxvee}{\mathcal{E}_{C,A}^{\vee}}
\newcommand{\fino}{\mathcal{O}}
\newcommand{\tykkp}{\mathbb{P}}

\newcommand{\tykkc}{\mathbb{C}}

\newcommand{\gon}{\mathrm{gon}}

\newcommand{\lengdexi}{\mathrm{length}(\xi)}
\newcommand{\finefine}{\mathcal{E}_{C,A}\otimes\mathscr{E}_{C,A}^{\vee}}
\newcommand{\finefineto}{\mathcal{E}\otimes\mathcal{E}^{\vee}}

\newcommand{\finixi}{\mathcal{I}_{\xi}}

\newcommand{\Cliff}{\mathrm{Cliff}}

\newcommand{\fing}{\mathcal{G}}
\newcommand{\finp}{\mathcal{P}}

\newtheorem{Prop}{Proposition}[section]
\newtheorem{Def}[Prop]{Definition}
\newtheorem{Teorem}[Prop]{Theorem}
\newtheorem{Lemma}[Prop]{Lemma}

\newtheoremstyle{Property}{\topsep}{\topsep}%
  {}%         Body font
  {}%         Indent amount (empty = no indent, \parindent = para indent)
  {\bfseries}% Thm head font
  {.}%        Punctuation after thm head
  { }%     Space after thm head (\newline = linebreak)
  {\thmname{#1}\thmnumber{ #2}\thmnote{ #3}}%         Thm head spec

\theoremstyle{Property}

\newtheoremstyle{Eksempel}{\topsep}{\topsep}%
  {}%         Body font
  {}%         Indent amount (empty = no indent, \parindent = para indent)
  {\bfseries}% Thm head font
  {.}%        Punctuation after thm head
  { }%     Space after thm head (\newline = linebreak)
  {\thmname{#1}\thmnumber{ #2}\thmnote{ #3}}%         Thm head spec

\theoremstyle{Eksempel}
\newtheorem{Eksempel}[Prop]{Example}
\newtheoremstyle{Bemerkning}{\topsep}{\topsep}%
  {}%         Body font
  {}%         Indent amount (empty = no indent, \parindent = para indent)
  {\bfseries}% Thm head font
  {.}%        Punctuation after thm head
  { }%     Space after thm head (\newline = linebreak)
  {\thmname{#1}\thmnumber{ #2}\thmnote{ #3}}%         Thm head spec

\theoremstyle{Bemerkning}
\newtheorem{Bemerkning}[Prop]{Remark}
\date{}

\title{Pencils of small degree on curves on unnodal Enriques surfaces}

\author{Nils Henry Rasmussen and Shengtian Zhou}

\subjclass{14H51, 14J28, 14J60}

\begin{document}

\begin{abstract}
We use vector-bundle techniques in order to compute $\dim W^1_d(C)$ where $C$ is general and smooth in a linear system on an unnodal Enriques surface. We furthermore find new examples of smooth curves on Enriques surfaces with an infinite number of $g^1_{\gon(C)}$'s.
\end{abstract}

\maketitle

\section{Introduction}

Let $S$ be a smooth surface over $\tykkc$, and $L$ a line-bundle on $S$. Let $W^r_d(C)$ be the Brill--Noether variety, parametrising complete $g^s_d$'s on $C$ for $s\geq r$. We will be concerned with finding the dimension of $W^1_d(C)$ for small $d$ when $S$ is an unnodal Enriques surface.

The theory on the dimension of $W^r_d(C)$ dates back to 1874, when Alexander von Brill and Max Noether made an incomplete proof stating that $\dim W^r_d(C)=\rho(g,r,d):=g-(r+1)(g-d+r)$ provided $C$ is general of genus $g$. It was first much later that strict proofs for this were presented (\cite{Laksov}, \cite{Kempf}, \cite{Griffiths}). In 1987, a new proof was constructed by Lazarsfeld (\cite{Lazarsfeld}) involving use of vector-bundle techniques for curves on K3 surfaces, exploiting the fact that for any $g\geq 2$, a K3 surface with Picard group $\mathbb{Z}C$ with $C$ a smooth genus $g$ curve can be constructed. These vector-bundle techniques, which were also developed by Tuyring (\cite{Tyurin}), were later used to study the gonality and Clifford index of any smooth curve on an arbitrary K3 surface (\cite{ciliberto}, \cite{knutsen-2003}, \cite{knutsen}, \cite{Aprodu-Farkas}). These methods have also lately been applied in the case of Enriques surfaces and rational surfaces with an anticanonical pencil (\cite{knutsen-2001}, \cite{KL09}, \cite{knutsen2009secant}, \cite{lelli2012green}).

The dimension of $W^1_d(C)$ was studied in \cite{Aprodu-Farkas} and \cite{lelli2012green} because of a result by Aprodu in 2005 (\cite{Aprodu}), stating that if $\dim W^1_d(C)=d-\gon(C)$ for $d\leq g-\gon(C)+2$, then the Green and Green--Lazarsfeld conjectures are satisfied. These conjectures state that the Clifford index and gonality can be read off minimal free resolutions of $\bigoplus_n H^0(C,\fino_C(nK_C))$ and $\bigoplus_n H^0(C,\fino_C(nA))$ for $\deg(A)\gg 0$, respectively (see \cite{Green} and \cite{G-L}).

\bigskip

In this article, we make an attempt at finding the dimension of $W^1_d(C)$ when $C$ is a smooth curve on an unnodal Enriques surface $S$. A smooth surface over $\tykkc$ is an Enriques surface if $h^1(S,\fino_S)=0$, $2K_S\sim 0$ and $K_S\nsim 0$. One defines
$$\phi(L):=\min\{L.E\,|\,E\in\Pic(S),\,E^2=0,\,E\not\equiv 0\}$$
and
$$\mu(L):=\min\{L.B-2\,|\,B\in\Pic(S)\text{ with $B$ effective, $B^2=4$, $\phi(B)=2$, and $B\not\equiv L$}\}.$$
By \cite{KL09}, the generic gonality for smooth curves in $|L|$, which we denote by $k$, is given by
\begin{equation*}
k=\min\left\{2\phi(L),\,\mu(L),\,\left\lfloor\frac{L^2}{4}\right\rfloor+2\right\}.
\end{equation*}
Furthermore, $k=\mu(L)<2\phi(L)$ precisely when:
\begin{itemize}
\item $L^2=\phi(L)^2$ with $\phi(L)\geq 2$ and even, in which case $k=\mu(L)=2\phi(L)-2$; or
\item $L^2=\phi(L)^2+\phi(L)-2$ with $\phi(L)\geq 3$, $L\not\equiv 2D$ for $D$ such that $D^2=10$, $\phi(D)=3$, in which case $k=2\phi(L)-1$ for $\phi(L)\geq 5$ and $k=2\phi(L)-2$ for $\phi(L)=3,4$.
\end{itemize}
If $(L^2,\phi(L))=(30,5),\,(22,4),\,(20,4),\,(14,3),\,(12,3)$ or $(6,2)$, then $k=\left\lfloor\frac{L^2}{4}\right\rfloor+2=\phi(L)-1$.

In all other cases, $k=2\phi(L)$.

\bigskip

Our main result is the following:

\begin{Teorem}\label{main}
Let $S$ be an unnodal Enriques surface, and let $|L|$ be an ample linear system with $L^2\geq 2$ such that $k=2\phi(L)<\mu(L)$. Then, for $k\leq d\leq g-k$ and $C$ general in $|L|$,
$$\dim W^1_{d}(C)=d-k.$$
\end{Teorem}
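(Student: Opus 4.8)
The plan is to prove the two inequalities separately; the bound $\dim W^1_d(C)\geq d-k$ is elementary and the reverse inequality carries all the content. For the lower bound, recall that $k=2\phi(L)$ is computed by an honest base-point-free pencil: choosing a primitive isotropic $E\in\Pic(S)$ with $E^2=0$ and $E.L=\phi(L)$, the restriction $\fino_C(2E)$ is a $g^1_k$ on every smooth $C\in|L|$. Attaching $d-k$ general base points gives a generically injective rational map $\mathrm{Sym}^{d-k}C\rightarrow W^1_d(C)$ sending $(p_1,\dots,p_{d-k})$ to the complete pencil $|2E+p_1+\cdots+p_{d-k}|$; its image has dimension $d-k$, so $\dim W^1_d(C)\geq d-k$.

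For the upper bound I would first stratify $W^1_d(C)$ according to the base-point-free part. Every complete $g^1_d$ is uniquely $|A'+b|$ for a complete base-point-free pencil $A'$ of some degree $d'$ with $k\leq d'\leq d$ and an effective divisor $b$ of degree $d-d'$, whence
$$\dim W^1_d(C)\leq\max_{k\leq d'\leq d}\bigl(\dim V_{d'}+(d-d')\bigr),$$
where $V_{d'}\subseteq W^1_{d'}(C)$ is the locus of complete base-point-free pencils. It therefore suffices to establish $\dim V_{d'}\leq d'-k$ for every $d'$ with $k\leq d'\leq d\leq g-k$.

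The tool for bounding $V_{d'}$ is the Lazarsfeld--Mukai bundle. To $A'\in V_{d'}$ I associate $\mathcal{E}_{C,A'}$, the dual of the kernel of the evaluation $H^0(C,A')\otimes\fino_S\to\fino_C(A')$; it is a rank-$2$ bundle with $\det\mathcal{E}_{C,A'}=\fino_S(L)$, $c_2(\mathcal{E}_{C,A'})=d'$ and $h^0(\mathcal{E}_{C,A'})=g+3-d'$. Running the usual Lazarsfeld--Mukai correspondence between pairs $(C,A')$ and bundles equipped with a pencil of sections, and comparing $\dim|L|+\dim V_{d'}$ with the moduli dimension $\mathrm{ext}^1(\mathcal{E}_{C,A'},\mathcal{E}_{C,A'})$ together with the fibre $\Gr(2,h^0(\mathcal{E}_{C,A'}))$, only recovers the classical Brill--Noether estimate, which is far too weak in the range of interest (where $\rho(g,1,d')$ may even be negative). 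The improvement to $d'-k$ must come from the failure of $\mathcal{E}_{C,A'}$ to be $L$-stable. The key structural step is therefore to show that for $C$ general and $A'\in V_{d'}$ the bundle $\mathcal{E}_{C,A'}$ is not $L$-stable, and to identify its destabilising sub-line-bundle: by the Bogomolov--Donagi--Morrison analysis on Enriques surfaces that underlies the gonality formula of \cite{KL09}, a destabilising $\fino_S(M)\hookrightarrow\mathcal{E}_{C,A'}$ exists with $M$ effective and $M^2\geq0$, $(L-M)^2\geq0$. Here the hypothesis $k=2\phi(L)<\mu(L)$ is decisive: it rules out the destabilisations coming from classes $B$ with $B^2=4$, $\phi(B)=2$ (which would produce pencils of degree $\mu(L)>k$), while $2\phi(L)<\lfloor L^2/4\rfloor+2$ excludes the Bogomolov-exceptional stable case, leaving only $M\equiv 2E$ (or the analogous twist by $K_S$) with $E$ isotropic and $E.L=\phi(L)$. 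One then reads off a short exact sequence $0\to\fino_S(2E)\to\mathcal{E}_{C,A'}\to\fino_S(L-2E)\otimes\finixi\to0$ with $\xi$ zero-dimensional of length $d'-k$, supported on $C$.

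With this structure in hand the count becomes geometric: on an unnodal Enriques surface there are only finitely many isotropic classes $E$ with $E.L=\phi(L)$, and restricting the displayed sequence to $C$ exhibits every $A'\in V_{d'}$ as determined by such an $E$ together with a length-$(d'-k)$ subscheme of $C$; since the latter varies in a $(d'-k)$-dimensional family this yields $\dim V_{d'}\leq d'-k$, and combined with the stratification gives $\dim W^1_d(C)\leq d-k$. I expect the main obstacle to be precisely the stability analysis of the previous paragraph: proving that $\mathcal{E}_{C,A'}$ always destabilises through the isotropic class and never in a stable or $\mu(L)$-type manner requires a careful case analysis of effective decompositions $L=M+(L-M)$ controlled by $\phi$ and $\mu$, and one must use the genericity of $C\in|L|$ both to discard the excess Lazarsfeld--Mukai bundles and to guarantee that $\dim V_{d'}$ attains its generic value. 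Secondary care is needed at the upper end $d'=g-k$ of the range, where Serre duality and the ampleness of $L$ are used to keep the cohomology of $\mathcal{E}_{C,A'}$ under control.
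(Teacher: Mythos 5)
Your proposal has a genuine gap at exactly the point you flag as ``the key structural step'': the claim that for $C$ general every Lazarsfeld--Mukai bundle $\mathcal{E}_{C,A'}$ fails to be $\mu_L$-stable, with destabilising sub-line-bundle forced to be (numerically) $\fino_S(2E)$. Neither assertion is available. Nothing in the Donagi--Morrison/Bogomolov circle of ideas rules out stable Lazarsfeld--Mukai bundles in the range $k\leq d\leq g-k$; indeed once $d\geq (g+2)/2$ the Brill--Noether number $\rho(g,1,d)=2d-g-2$ is non-negative and stable bundles are expected to occur. The paper therefore splits into two cases and handles the stable case by a completely different mechanism (Section \ref{stable-section}): stability forces the bundle to be simple, so $h^0(S,\finefineto)\leq 1$, and via the Pareschi-style coboundary argument of Proposition \ref{coboundary} (which itself needs the domination hypothesis and Sard's lemma) one gets $\dim\ker\mu_{0,A}\leq 2$, hence $\dim W^1_d(C)\leq\rho(g,1,d)+2=2d-g$. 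Note that $2d-g\leq d-k$ exactly when $d\leq g-k$: this is where the upper limit of the range in the theorem is genuinely used, not as a boundary technicality. Without an argument for the stable case your proof is incomplete, and your route of ``proving non-stability'' would fail rather than merely be hard.

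Even restricted to the non-stable case, your identification $M\equiv 2E$ overshoots what can be proved, and the paper does not need it: it works with an arbitrary destabilising sequence $0\to\fino_S(M)\to\finecax\to\fino_S(N)\otimes\finixi\to 0$ and proves only the numerical bound $M.N\geq k-1$ (Proposition \ref{MN-bound}), via the Clifford-index argument of Lemma \ref{Clifford-index} combined with $\Cliff(C)=k-2$ for general $C$ (this is also where the hypothesis $\mu(L)>2\phi(L)$ enters, in the special case $N^2=4$, $\phi(N)=2$ --- not to exclude whole destabilisation types). Finally, your concluding count ``finitely many $E$'s times a $(d'-k)$-dimensional family of $\xi$'s'' is too naive: one must also account for the projectivised extension space $\tykkp\,\mathrm{Ext}^1(\fino_S(N)\otimes\finixi,\fino_S(M))$, the Grassmannian $\Gr(2,h^0(S,\fine))$ of pencils of sections, and subtract the fibre dimension $h^0(S,\finefineto)-1$ of the map $([\fine],\Lambda)\mapsto(C_\Lambda,A_\Lambda)$; the paper's Proposition \ref{non-stable-result} shows these contributions nearly cancel only after careful bookkeeping (Lemmas \ref{extensions} and \ref{M-N}), yielding $\dim\finw\leq g-1+d-k$ over the whole linear system rather than a fibrewise bound with $C$ fixed. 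Your lower bound $\dim W^1_d(C)\geq d-k$ and the base-point stratification are fine and agree with the paper's reductions (\ref{base-points}), (\ref{assumption1}).
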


\begin{Bemerkning}\label{non-lg}
In the case where $L=n(E_1+E_2)$ for $\geq 3$ and $E_1.E_2=2$, we have $k=\mu(L)< 2\phi(L)$, by \cite[Corollary 1.5 (a)]{KL09}. In Example \ref{non-linear} we prove that there exists a sub-linear system $\mathfrak{d}\subseteq |L|$ of smooth curves such that for general $C\in \mathfrak{d}$, there exist infinitely many $g^1_{\gon(C)}$'s. These curves are non-exceptional and are, as far as we know, new examples of curves with an infinite number of $g^1_{\gon(C)}$'s.
\end{Bemerkning}

\bigskip

\begin{Bemerkning}
A conjecture by Martens (\cite[Statement T, page 280]{Martens}) states that if $\dim W^1_{\gon(C)}(C)=0$, then $\dim W^1_d(C)=d-\gon(C)$ for $d\leq g-\gon(C)+2$; and that if $\dim W^1_{\gon(C)}(C)=1$, then $\dim W^1_d(C)=d-\gon(C)+1$ for $d\leq g-\gon(C)+2$. We therefore expect that Theorem \ref{main} is valid for $d\leq g-k+2$, and hence that the Green and Green--Lazarsfeld conjectures are satisfied for the curves in question.
\end{Bemerkning}

\bigskip

This paper is organised as follows: In Section \ref{prels}, we introduce the basic results of Brill--Noether theory and the vector-bundles associated to the pairs $(C,A)$, where $|A|$ is a $g^1_d$ on $C$. In Section \ref{non-stable}, we prove Theorem \ref{main} in the case where the general vector-bundles are non-stable, while the stable case is covered in Section \ref{stable-section}. We close with an example of a sub-linear system of curves with an infinite number of $g^1_{\gon(C)}$'s in Section \ref{example-section}.

\bigskip

\noindent\textbf{Acknowledgments.} Thanks to Andreas Leopold Knutsen for introducing us to this subject, and for valuable comments and remarks.

\section{Preliminaries}\label{prels}
\subsection{Brill--Noether theory}

Let $C$ be a smooth curve over $\tykkc$, and let $r$ and $d$ be non-negative integers. Then there is a variety $W^r_d(C)$ that parametrises all complete $g^s_d$'s on $C$, for all $s\geq r$.

Let $|A|$ be a complete $g^r_d$ on $C$, and let $\mu_{0,A}:H^0(C,\fino_C(A))\otimes H^0(C,\fino_C(K_C-A))\rightarrow H^0(C,\fino_C(K_C))$ be the cup-product mapping. (This is known as the Petri map.) Then, from \cite[IV, Proposition 4.2]{arbarello}, we have
\begin{equation}\label{rho-equality}
\dim T_{[A]}W^r_d(C)=\rho(g,r,d)+\dim\ker(\mu_{0,A}),
\end{equation}
where $\rho(g,r,d):=g-(r+1)(g-d+1)$ is called the Brill--Noether Number, and also known as ``the expected dimension of $W^r_d(C)$''.

Furthermore, if $|A|$ is base-point free and $h^0(C,\fino_C(A))=2$, then the base-point free pencil trick (\cite[page 126]{arbarello}) gives us

\begin{equation}\label{mu-equality}
\ker\mu_{0,A}=H^0(C,\fino_C(K_C-2A)).
\end{equation}

One defines the \emph{gonality} of $C$ to be the smallest $d$ such that there exists a $g^1_d$ on $C$, and denotes it by $\gon(C)$. It is known that for any smooth curve $C$ of genus $g$,
\begin{equation}\label{gon-boundary}
\gon(C)\leq\left\lfloor\frac{g+3}{2}\right\rfloor.
\end{equation}
For the general curve of genus $g$, we have equality in (\ref{gon-boundary}). Note that for curves on Enriques surfaces, since it is known that $\phi(C)\leq \sqrt{C^2}=\sqrt{2g-2}$, the gonality is usually not maximal.

Let $W$ be a component of $W^1_d(C)$ containing $A$. Then,

\begin{equation}\label{rho-lg}
\text{if }\dim\ker\mu_{0,A}=0\text{ and }d\leq g-\gon(C)+2\text{, then }\dim W\leq d-\gon(C).
\end{equation}

Also, note that if the general $g^1_d$ in $W$ has base-points, then we can obtain these $g^1_d$'s by considering $g^1_{d-1}$'s and add base-points. It follows that

\begin{equation}\label{base-points}
\text{if the general $g^1_d$'s in $W$ have base-points, then $\dim W\leq \dim W^1_{d-1}(C)+1$.}
\end{equation}

The following definition, which was introduced in \cite{Martens-1968}, generalises the notion of gonality for a curve $C$:

\begin{Def}\label{Cliff-def}
Let $C$ be a smooth curve of genus $g\geq 4$. The Clifford index of $C$ is defined to be
$$\Cliff(C):=\min\{\deg(A)-2(h^0(C,\fino_C(A))-1)\,|\,h^0(C,\fino_C(A))\geq 2\text{ and }h^1(C,\fino_C(A))\geq 2\}.$$
If $A$ is a divisor on $C$ satisfying $h^0(C,\fino_C(A))\geq 2$ and $h^1(C,\fino_C(A))\geq 2$, then one says that $A$ contributes to the Clifford index of $C$, and $A$ is then defined to have Clifford index $\Cliff(A):=\deg(A)-2(h^0(C,\fino_C(A))-1)$.

If $C$ is hyperelliptic of genus $2$ or $3$, one defines $\Cliff(C)=0$; and if $C$ is non-hyperelliptic of genus $3$, one defines $\Cliff(C)=1$.
\end{Def}

It was proved in \cite[Theorem 2.3]{C-M} that $\Cliff(C)\in\{k-2,k-3\}$, where $k=\gon(C)$. We have $\Cliff(C)=k-2=\lfloor\frac{g-1}{2}\rfloor$ if $C$ is general in $\mathscr{M}_g$ for $g\geq 2$. If $\Cliff(C)=k-3$, then $C$ is said to be \emph{exceptional}.

\subsection{Vector-bundle techniques}

Let $S$ be an Enriques surface, and let $L$ be a line-bundle on $S$. One defines $\finw^1_d|L|:=\{(C,A)\,|\,C\in |L|_s,\,A\in W^1_d(C)\}$, and $\pi:\finw^1_d|L|\rightarrow |L|_s$ the natural projection map, where $|L|_s$ denotes the smooth curves of $|L|$. Each fibre of $\pi$ is isomorphic to $W^1_d(C)$.

Let $\finw$ be an irreducible component of $\finw^1_d|L|$ such that $\pi$ restricted to $\finw$ dominates. By (\ref{base-points}), we can assume that for general $(C,A)$ in $\finw$, $|A|$ is base-point free. It thus makes sense to study the associated \emph{Lazarsfeld--Mukai vector bundles}, $\finfca$ and $\finecax$ (see \cite{Lazarsfeld}).

Let $A\in W^1_d(C)\setminus W^2_d(C)$ be base-point free. The vector-bundle $\finfca$ is defined by
\begin{equation}\label{F-sekvens}
\xymatrix@1{
0 \ar[r] &\finfca \ar[r] &H^0(S,\fino_S(A))\otimes\fino_S \ar[r]^-{\mathrm{ev}} &\fino_S(A)\ar[r] &0.
}
\end{equation}
One denotes the dual of $\finf$ by $\finf^{\vee}=\finecax$. Dualising (\ref{F-sekvens}), one gets
\begin{equation}\label{E-sekvens}
0\rightarrow H^0(S,\fino_S(A))^{\vee}\otimes\fino_S\rightarrow\finecax\rightarrow \fino_C(K_C-A+K_S|_C)\rightarrow 0.
\end{equation}

Note that because we are assuming $d\leq g-\gon(C)$, then $h^0(C,\fino_C(K_C-A+K_S|_C))>0$, by Riemann--Roch. Hence, the vector-bundles $\finecax$ are globally generated away from a finite set of points, those points being the possible base-points of $\fino_C(K_C-A+K_S|_C)$. One has the following properties of $\finecax$:
\begin{eqnarray}
&\bullet&c_1(\finecax)=L \label{c1} \\
&\bullet&c_2(\finecax)=d \label{c2} \\
&\bullet&h^0(S,\finecaxvee)=h^1(S,\finecaxvee)=0,\,h^2(S,\finecax)=0 \label{h1-and-h2} \\
&\bullet&h^1(S,\finecax)=h^0(C,\fino_C(A+K_S|_C)) \label{h1fine}
\end{eqnarray}

Given a vector-bundle $\fine$ of rank $2$, with $c_1(\fine)=L$, $c_2(\fine)=d$, and $h^2(S,\fine)=0$, and which is finitely generated away from a finite set of points, then given a two-dimensional subspace $\Lambda$ in $H^0(S,\fine)$, the cokernel of $\Lambda\otimes\fino_S\hookrightarrow \fine$ is isomorphic to $\fino_{C_{\Lambda}}(B)$ for some $C_{\Lambda}\in|L|$, and where $B$ is a torsion-free sheaf of rank $1$ on $C_\Lambda$. If $C_\Lambda$ is smooth, then $B\cong \fino_{C_\Lambda}(K_{C_\Lambda}-A_\Lambda+K_S|_{C_\Lambda})$ for some $|A|\in W^1_d(C_\Lambda)$, giving us an exact sequence
\begin{equation}\label{lambda}
0\rightarrow \Lambda\otimes\fino_S\rightarrow\fine\rightarrow \fino_{C_{\Lambda}}(K_{C_\Lambda}-A_\Lambda+K_S|_{C_\Lambda})\rightarrow 0.
\end{equation}

An important tool for us will be the following:

\begin{Prop}\label{coboundary}
Suppose that $\finw$ is a component of $\finw^1_d|L|$ such that $\pi:\finw\rightarrow |L|$ dominates. Let $(C,A)$ be sufficiently general in $\finw$, and suppose that $|A|$ is base-point free for these $A$. Then there exists an exact sequence
$$0\rightarrow H^0(C,K_S|_C)\rightarrow H^0(C,\finecaxvee\otimes\fino_C(K_C-A))\rightarrow H^0(C,\fino_C(K_C-2A))\rightarrow 0.$$
In particular, $h^0(C,\finecaxvee\otimes\fino_C(K_C-A))=\dim\ker\mu_{0,A}$.
\end{Prop}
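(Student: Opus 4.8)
The plan is to obtain the sequence by restricting the defining sequence (\ref{F-sekvens}) of $\finecaxvee=\finfca$ to $C$ and then twisting. First I would tensor (\ref{F-sekvens}) by $\fino_C$. Since the quotient $\fino_S(A)=\iota_*\fino_C(A)$ (with $\iota\colon C\hookrightarrow S$) is not locally free, the restriction carries a $\mathrm{Tor}$-term; a standard local computation with the Koszul resolution of $\fino_C$ gives $\mathrm{Tor}_1^{\fino_S}(\iota_*\fino_C(A),\fino_C)\cong\fino_C(A-C)$ and $\iota_*\fino_C(A)\otimes\fino_C=\fino_C(A)$, so restriction yields the four-term exact sequence $0\to\fino_C(A-C)\to\finecaxvee|_C\to H^0(\fino_C(A))\otimes\fino_C\to\fino_C(A)\to 0$ on $C$. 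The right-hand evaluation is surjective because $|A|$ is base-point free, and by the base-point-free pencil trick its kernel is $\fino_C(-A)$; splitting off this part leaves $0\to\fino_C(A-C)\to\finecaxvee|_C\to\fino_C(-A)\to 0$.

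Next I would tensor this last sequence by $\fino_C(K_C-A)$. The sub-sheaf becomes $\fino_C(K_C-C)$, and by adjunction $K_C=(K_S+C)|_C$, so $\fino_C(K_C-C)=K_S|_C$; the quotient becomes $\fino_C(K_C-2A)$. Thus I obtain $0\to K_S|_C\to\finecaxvee\otimes\fino_C(K_C-A)\to\fino_C(K_C-2A)\to 0$ on $C$. Passing to cohomology produces the exact sequence $0\to H^0(K_S|_C)\to H^0(\finecaxvee\otimes\fino_C(K_C-A))\xrightarrow{\beta}H^0(\fino_C(K_C-2A))\xrightarrow{\delta}H^1(K_S|_C)$. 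By the base-point-free pencil trick (\ref{mu-equality}) the target of $\beta$ is exactly $\ker\mu_{0,A}$, so this cohomology sequence is already the asserted one except possibly for right-exactness, and the entire statement reduces to showing that the connecting map $\delta$ vanishes.

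The vanishing of $\delta$ is the crux, and it is where the genericity hypothesis enters; I expect this to be the main obstacle, the sheaf-theoretic steps above being routine. By Serre duality $\delta$ is dual to a map $H^0(\fino_C(C))\to H^1(\fino_C(2A))$ whose source is the tangent space $T_C|L|$ to $|L|$ at $C$ and whose target is $(\ker\mu_{0,A})^\vee$; I would identify this dual map, up to the standard identifications, with the obstruction to deforming the pencil $|A|$ along a first-order deformation of $C$ in $|L|$. Since $(C,A)$ is a general point of a component $\finw$ whose projection $\pi$ dominates $|L|$, the differential of $\pi$ at $(C,A)$ is surjective onto $T_C|L|$, so every first-order deformation of $C$ in $|L|$ lifts to a deformation of the pair $(C,A)$; hence this obstruction vanishes identically and $\delta=0$. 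Finally, for the ``in particular'' clause I would note that $K_S|_C$ is a non-trivial $2$-torsion line bundle on $C$ — the étale K3 cover of $S$ restricts over the ample curve $C$ to the connected double cover attached to $K_S|_C$ — so $K_S|_C\not\cong\fino_C$ and $h^0(C,K_S|_C)=0$; combined with the exact sequence and (\ref{mu-equality}) this gives $h^0(\finecaxvee\otimes\fino_C(K_C-A))=\dim\ker\mu_{0,A}$.
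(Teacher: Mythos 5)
Your proposal is correct and follows essentially the same route as the paper: your Tor/Koszul derivation of the sheaf sequence $0\to\fino_C(K_S|_C)\to\finecaxvee|_C\otimes\fino_C(K_C-A)\to\fino_C(K_C-2A)\to 0$ is a cosmetic variant of the paper's diagram-and-determinant argument, and your proof that the connecting map $\delta$ vanishes — identifying its Serre dual with the obstruction to lifting first-order deformations of $C$ to deformations of the pair $(C,A)$, then using surjectivity of $\mathrm{d}\pi_{C,A}$ from the domination hypothesis and generic smoothness — is exactly the paper's argument, which it defers to Pareschi. Your only addition is an explicit justification that $h^0(C,K_S|_C)=0$ (via connectedness of the K3 cover over an ample curve) for the ``in particular'' clause, a point the paper leaves implicit.
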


\begin{proof}
We follow the proof of \cite[Theorem 2]{Pareschi}. (See also \cite[Proposition 3.2]{lelli2012green}.)

Since $|A|$ is base-point free and $h^0(C,\fino_C(A))=2$, we have an exact sequence
$$0\longrightarrow \fino_C(-A)\longrightarrow H^0(C,\fino_C(A))\otimes\fino_C\overset{\mathrm{ev}}{\longrightarrow} \fino_C(A)\longrightarrow 0,$$
where $\mathrm{ev}$ is the evalutation morphism.

The diagram
$$\xymatrix{
0 \ar[r] & \finecaxvee \ar[r]\ar[d] & H^0(C,\fino_C(A))\otimes\fino_S \ar[r]\ar[d] & \fino_C(A) \ar[r]\ar[d] & 0 \\
0 \ar[r] & \fino_C(-A) \ar[r]\ar[d] & H^0(C,\fino_C(A))\otimes\fino_C \ar[r]\ar[d] & \fino_C(A) \ar[r]\ar[d] & 0 \\
  & 0 & 0  & 0 & }
$$
yields a surjection $\finecaxvee|_C\rightarrow \fino_C(-A)\rightarrow 0$, and since $\bigwedge^2\finecaxvee|_C=\fino_C(-K_C+K_S|_C)$, the kernel must be $\fino_C(A-K_C+K_S|_C)$, and we get the sequence
$$0\rightarrow \fino_C(A-K_C+K_S|_C)\rightarrow \finecaxvee|_C\rightarrow \fino_C(-A)\rightarrow 0.$$
We tensor with $\fino_C(K_C-A)$ and get
\begin{equation*}
0\rightarrow\fino_C(K_S|_C)\rightarrow\finecaxvee|_C\otimes\fino_C(K_C-A)\rightarrow\fino_C(K_C-2A)\rightarrow 0.
\end{equation*}
Taking global sections gives us
\begin{multline*}
0\rightarrow H^0(C,\fino_C(K_S|_C))\rightarrow H^0(C,\finecaxvee|_C\otimes\fino_C(K_C-A))\rightarrow H^0(C,\fino_C(K_C-2A))\\
\rightarrow H^1(C,\fino_C(K_S|_C)).
\end{multline*}

Note that from (\ref{mu-equality}) we have $H^0(C,\fino_C(K_C-2A))=\ker\mu_{0,A}$. Following an argument identical to \cite[Lemma 1]{Pareschi}, we have that the coboundary-map $H^0(C,\fino_C(K_C-2A))\rightarrow H^1(C,\fino_C(K_S|_C))$ up to constant factors is equal to the map $\mu_{1,A,S}:\ker_{0,A}\rightarrow H^1(C,\fino_C(K_S|_C))$ which is given as follows:

The map $\mu_{1,A,S}$ is the composition of the Gaussian map $\mu_{1,A}:H^0(C,\fino_C(K_C-2A))\rightarrow H^0(C,\fino_C(2K_C))$ with the transpose of the Kodaira--Spencer map $\delta_{C,S}^{\vee}:H^0(C,\fino_C(2K_C))\rightarrow (T_C|L|)^{\vee}=H^1(C,N_{C|S}^{\vee}\otimes\fino_C(K_C))=H^1(C,\fino_C(K_S|_C))$.

The lemma follows from considering a commutative diagram
$$\xymatrix{
0 \ar[r] & \fino_C(K_S|_C) \ar[r]\ar@2{-}[d] & \finecaxvee|_C\otimes\fino_C(K_C-A) \ar[r]\ar[d] & \fino_C(K_C-2A) \ar[r]\ar[d]^s & 0 \\
0 \ar[r] & \fino_C(K_S|_C) \ar[r] & \Omega^1_S\otimes\fino_C(K_C) \ar[r] & \fino_C(2K_C) \ar[r]  & 0,} $$
where $\mu_{1,A}$ is found by considering $s$ on the global sections level, and $\delta_{C,S}^{\vee}$ is the coboundary map $H^0(C,\fino_C(2K_C))\rightarrow H^1(C,\fino_C(K_S|_C))$.

In \cite[page 197]{Pareschi}, it is argued that
$$\mathrm{Im}(\mathrm{d}\pi_{C,A})\subset \mathrm{Ann}(\mathrm{Im}(\mu_{1,A,S})).$$
We also have a natural inclusion
$$\mathrm{Ann}(\mathrm{Im}(\mu_{1,A,S}))\subset H^1(C,\fino_C(K_S|_C))^{\vee},$$
and the latter has dimension $g-1$.

Since by assumption $\pi$ dominates $|L|$, then by Sard's lemma, $\mathrm{d}\pi_{C,A}$ is surjective for general $(C,A)$, and so $\mathrm{Im}(\mathrm{d}\pi_{C,A})$ also has dimension $g-1$.

It follows that $\mathrm{Ann}(\mathrm{Im}(\mu_{1,A,S}))= H^1(C,\fino_C(K_S|_C))^{\vee}$, and so $\mathrm{Im}(\mu_{1,A,S})=0$. Hence, the sequence
$$0\rightarrow H^0(C,\fino_C(K_S|_C))\rightarrow H^0(C,\finf_{C,A}|_C\otimes\fino_C(K_C-A))\rightarrow H^0(C,\fino_C(K_C-2A))\rightarrow 0$$
is exact.
\end{proof}

We will prove the main theorem by considering the case where the general $\finecax$'s are $\mu_L$-stable and non-$\mu_L$-stable.
\begin{Def}
Given a line-bundle $L$ on a surface $S$, a vector-bundle $\fine$ is said to be $\mu_L$-\emph{stable} if for any sub-vector bundle $\fine'$ of rank $0<\mathrm{rk}(\fine')<\mathrm{rk}(\fine)$, we have
$$\frac{c_1(\fine').L}{\mathrm{rk}(\fine')}<\frac{c_1(\fine).L}{\mathrm{rk}(\fine)}.$$
A vector-bundle $\fine$ is said to be \emph{non-}$\mu_L$\emph{-stable} if there exists a sub-vector bundle $\fine'$ of rank $0<\mathrm{rk}(\fine')<\mathrm{rk}(\fine)$ satisfying
$$\frac{c_1(\fine').L}{\mathrm{rk}(\fine')}\geq\frac{c_1(\fine).L}{\mathrm{rk}(\fine)}.$$
\end{Def}

\subsection{Assumptions}

Throughout the article, we will be using the following assumptions:

\begin{eqnarray}
&\bullet&\dim W^1_{d-1}(C)=d-1-k \text{ for $C$ general in $|L|$ (by induction).} \label{assumption1} \\
&\bullet&\text{The general $g^1_d$'s are base-point free for general $(C,A)$ in a component}\nonumber \\
&&\text{$\finw$ of $\finw^1_d|L|$.} \label{assumption2} \\
&\bullet&\text{$k\geq 3$ (since linear growth is always satisfied for hyperelliptic curves. This}\nonumber \\
&&\text{implies that $L^2\geq 4$)}\label{nonhyperelliptic}\\
&\bullet&k\leq d\leq g-k\text{. In particular, }k\leq \frac{g}{2}\text{, since Theorem \ref{main} is otherwise trivially} \nonumber \\
&&\text{satisfied}\label{k-leq-g/2}\\
&\bullet&\text{$\finw$ is a component of $\finw^1_d|L|$ such that $\pi:\finw\rightarrow |L|$ dominates}\nonumber\\
&&\text{and for general $C\in |L|$ the fibre over $C$ has dimension }\dim W^1_d(C)\label{dominates}
\end{eqnarray}

\section{The case where the $\finecax$'s are non-$\mu_{L}$-stable}\label{non-stable}

In this section, we will assume that for general $(C,A)\in\finw$, the vector-bundles $\finecax$ are non-$\mu_L$-stable. The main result of this section is Proposition \ref{non-stable-result}, where we do a parameter count of all possible non-$\mu_L$-stable vector-bundles that satisfy the properties of $\finecax$.

We start by recalling two results, one from \cite{knutsen2007sharp} and one from \cite{KL09}, which we will be using several times throughout this section:

\begin{Teorem}[{\cite[Theorem]{knutsen2007sharp}, case of Enriques surfaces}]\label{knutsen2007sharp}
Let $S$ be an Enriques surface, and $\fino_S(D)$ a line-bundle on $S$ such that $D>0$ and $D^2\geq 0$. Then $H^1(S,\fino_S(D))\neq 0$ if and only if one of the three following occurs:
\begin{itemize}
\item[(i)] $D\sim nE$ for $E>0$ nef and primitive with $E^2=0$, $n\geq 2$ and $h^1(S,\fino_S(D))=\left\lfloor \frac{n}{2}\right\rfloor$;
\item[(ii)] $D\sim nE+K_S$ for $E>0$ nef and primitive with $E^2=0$, $n\geq 3$ and $h^1(S,\fino_S(D))=\left\lfloor \frac{n-1}{2}\right\rfloor$;
\item[(iii)] there is a divisor $\Delta>0$ such that $\Delta^2=-2$ and $\Delta.D\leq -2$.
\end{itemize}
\end{Teorem}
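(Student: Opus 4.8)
The plan is to combine Riemann--Roch with a reduction to the nef case. On an Enriques surface $2K_S\sim 0$ forces $D.K_S=0$ for every divisor $D$, so Riemann--Roch reads $\chi(\fino_S(D))=1+\tfrac12 D^2$, and Serre duality gives $h^2(S,\fino_S(D))=h^0(S,\fino_S(K_S-D))$. When $D>0$ and $D^2\geq 0$ the class $K_S-D$ cannot be effective, since otherwise $K_S$ itself would be effective, contradicting $p_g=0$; hence $h^2=0$ and the whole question reduces to comparing $h^0(S,\fino_S(D))$ with $1+\tfrac12 D^2$.

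For the \emph{if} direction I would treat the three cases separately. In (i) and (ii) the class is $nE$ or $nE+K_S$ with $E$ primitive, isotropic and nef, and the standard dimension formulas for linear systems of half-pencils, $h^0(\fino_S(nE))=\lfloor n/2\rfloor+1$ and $h^0(\fino_S(nE+K_S))=\lfloor(n-1)/2\rfloor+1$, yield the stated values of $h^1$ after subtracting $\chi=1$. In (iii), given $\Delta>0$ with $\Delta^2=-2$ and $\Delta.D\leq-2$, I would restrict along $\Delta$ using $0\to\fino_S(D-\Delta)\to\fino_S(D)\to\fino_\Delta(D)\to 0$: on the arithmetic genus-$0$ curve $\Delta$ one has $\deg\fino_\Delta(D)\leq-2$, so $H^0(\Delta,\fino_\Delta(D))=0$ while $H^1(\Delta,\fino_\Delta(D))\neq 0$; a short chase then shows every section of $\fino_S(D)$ vanishes along $\Delta$ and, comparing Euler characteristics, forces $H^1(S,\fino_S(D))\neq 0$.

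For the \emph{only if} direction the core is a nef reduction. If $D$ is nef and big ($D^2>0$), then since $K_S$ is numerically trivial $D-K_S$ is again nef and big, so Kawamata--Viehweg vanishing applied to $D=K_S+(D-K_S)$ gives $H^1=0$; thus no nef and big divisor contributes. If $D$ is nef with $D^2=0$, its numerical class is $nE$ for a primitive isotropic nef $E$ (which is effective by Riemann--Roch), and the two integral lifts $nE$ and $nE+K_S$ place $D$ in (i) or (ii). Finally, if $D$ is not nef I would choose an irreducible curve $\Delta$ with $D.\Delta<0$; on an Enriques surface this is necessarily a $(-2)$-curve and a component of $D$. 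If $D.\Delta\leq-2$ we are already in (iii); if $D.\Delta=-1$ then $H^1(\Delta,\fino_\Delta(D))=0$, the restriction sequence gives $H^1(S,\fino_S(D-\Delta))\cong H^1(S,\fino_S(D))$, and since $(D-\Delta)^2=D^2\geq 0$ and $D-\Delta>0$ I can induct on $D.H$ for a fixed ample $H$.

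The step I expect to be the main obstacle is the bookkeeping in this induction: after passing to $D-\Delta$ one must transfer the conclusion back to $D$, showing that attaching a $(-2)$-curve with $D.\Delta=-1$ neither destroys membership in (i)--(iii) nor changes the predicted value of $h^1$, and one must verify that the negation of (iii) (namely $D.\Gamma\geq-1$ for every $(-2)$-curve $\Gamma$) really does force the nef or isotropic alternatives. Getting the \emph{exact} dimensions in (i) and (ii), rather than mere nonvanishing, is where the careful intersection-theoretic casework concentrates; the remaining manipulations are formal.
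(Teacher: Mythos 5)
First, a point of reference: the paper does not prove this statement at all --- it is quoted verbatim from Knutsen--Lopez \cite{knutsen2007sharp} --- so there is no internal proof to compare against. Your strategy (restriction sequences plus an induction on degree, with Kawamata--Viehweg for the nef-and-big case) also differs from the source, which argues via Ramanujam's lemma applied to members of $|D+K_S|$. Evaluated on its own terms, your proposal has two genuine gaps. The first is in the ``if'' direction for case (iii). The divisor $\Delta$ there is only required to be effective with $\Delta^2=-2$; it may be reducible and non-reduced, and then your claim that $\deg\fino_\Delta(D)\leq -2$ forces $H^0(\Delta,\fino_\Delta(D))=0$ is false as a general principle: take $\Delta=\Gamma_1+\Gamma_2$ with $\Gamma_1.\Gamma_2=1$ and $\fino_\Delta(D)$ of component degrees $(3,-5)$; a section need only vanish on $\Gamma_2$ and at the node, leaving a nonzero space of sections on $\Gamma_1\cong\tykkp^1$. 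What does survive is $h^1(\Delta,\fino_\Delta(D))\geq -D.\Delta-1\geq 1$, since $\chi(\fino_\Delta(D))=D.\Delta+1$. But transporting this to $S$ is exactly where your ``short chase'' breaks down: the long exact sequence only gives $h^1(S,\fino_S(D))\geq h^1(\Delta,\fino_\Delta(D))-h^2(S,\fino_S(D-\Delta))$, and likewise your Euler-characteristic comparison yields only $h^1(S,\fino_S(D))\geq 1-h^0(S,\fino_S(K_S-D+\Delta))$. Since $D$ is not assumed nef, there is no cheap reason why $K_S-D+\Delta$ is non-effective, and this $h^2$-term is nowhere addressed.

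The second gap is the transfer step of your induction, which you call ``bookkeeping'' but which is the substantive point. Your framework up to there is sound: $h^2(S,\fino_S(D))=0$, Kawamata--Viehweg in the nef-and-big case, the classification of nef isotropic classes, and the fact that a curve $\Gamma$ with $D.\Gamma<0$ is a $(-2)$-curve inside $D$ are all correct, and when $D.\Delta=-1$ the cases (i)/(ii) for $D-\Delta$ are indeed impossible because $(D-\Delta).\Delta=1$ cannot equal $nE.\Delta$ with $E$ nef and $n\geq 2$. So the induction always returns case (iii) for $D-\Delta$, with some witness $\Delta'$ satisfying $\Delta'.(D-\Delta)\leq -2$, and you must manufacture a witness for $D$ itself. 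The natural candidate is the reflection $\Delta'':=\Delta'+(\Delta'.\Delta)\Delta$, which does satisfy $(\Delta'')^2=-2$ and $\Delta''.D=\Delta'.(D-\Delta)\leq -2$; but $\Delta''$ is effective only when $\Delta'.\Delta\geq 0$, and precisely when $\Delta$ occurs as a component of $\Delta'$ (which is allowed, and is natural since $\Delta'$ must share components with $D-\Delta$) one can have $\Delta'.\Delta<0$, and effectivity fails. Without closing this case the induction does not terminate in (i)--(iii), so the ``only if'' direction is not established. Both gaps are fixable, but the cleanest repair is to abandon the restriction/induction arguments in favour of the identity $h^1(S,\fino_S(D))=h^0(\fino_G)-1$ for any $G\in|D+K_S|$ (which is nonempty), obtained from $0\to\fino_S(-G)\to\fino_S\to\fino_G\to 0$, Serre duality and $2K_S\sim 0$; one then analyses connectedness and multiple structures of $G$, which is essentially the route of Knutsen and Lopez. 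As written, the proposal does not prove the theorem.
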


Note that since the Enriques surfaces in question in our article are assumed to be unnodal, then part (iii) of Theorem \ref{knutsen2007sharp} cannot occur.

\begin{Lemma}[{\cite[Lemma 2.12]{KL09}}]\label{lemma212}
Let $L>0$ be a line bundle on an Enriques surface S with $L^2\geq 0$. Then there is an integer $n$ such that $1\leq n\leq 10$ and, for any $i=1,\dots,n$, there are primitive divisors $E_i>0$ with $E_i^2=0$ and integers $a_i>0$ such that
$$L\equiv a_1E_1+\cdots+a_nE_n$$
and one of the following intesection sets occurs:
\begin{itemize}
\item[(i)] $E_i.E_j=1$ for $1\leq i<j\leq n$.
\item[(ii)] $n\geq 2$, $E_1.E_2=2$ and $E_i.E_j=1$ for $2\leq i<j\leq n$ and for $i=1$, $3\leq j\leq n$.
\item[(iii)] $n\geq 3$, $E_1.E_2=E_1.E_3=2$ and $E_i.E_j=1$ for $3\leq i<j\leq n$, for $i=1$, $4\leq j\leq n$ and for $i=2$, $3\leq j\leq n$.
\end{itemize}
\end{Lemma}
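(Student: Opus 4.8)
The plan is to reduce the statement to pure lattice theory in $\Num(S)$ and then argue by induction. Recall that on any Enriques surface $\Num(S)$ is the even unimodular lattice of signature $(1,9)$, i.e.\ $U\oplus E_8(-1)$. Since $S$ is unnodal it carries no $(-2)$-curves, so by Riemann--Roch every primitive class $E$ with $E^2=0$ and $E.H>0$ for an ample $H$ is effective and nef; conversely each $E_i$ in the statement is such a class. Hence the requirement that the $E_i$ be primitive effective divisors with $E_i^2=0$ is equivalent to choosing primitive isotropic classes in the positive cone, and the entire assertion becomes the combinatorial fact that $L$, with $L^2\ge 0$ and $L$ in the positive cone, can be written as a non-negative integral combination of primitive isotropic vectors realising one of the prescribed Gram patterns.

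First I would set up the extraction step. Choose a primitive isotropic $E_1>0$ computing $\phi$, i.e.\ $L.E_1=\phi(L)$; such a class exists because $\phi(L)$ is realised by a nef half-pencil. One then peels off a maximal multiple $a_1E_1$ so that $L':=L-a_1E_1$ stays in the closure of the positive cone with $(L')^2<L^2$, and repeats, at each stage selecting the next isotropic class $E_{i+1}$ minimising the intersection with the current residual divisor. Because the chosen $E_i$ are forced to be linearly independent (a linear dependence would let one lower $n$), and because they all lie in a lattice of rank $10$, the process terminates after $n\le 10$ steps with $L\equiv\sum a_iE_i$ and all $a_i>0$.

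The main obstacle is pinning down the pairwise intersections $E_i.E_j$ and showing they fall into exactly one of the three listed patterns. Here I would invoke the Hodge index theorem on the rank-$n$ sublattice $\Lambda:=\langle E_1,\dots,E_n\rangle\subset\Num(S)$: since $\Num(S)$ has signature $(1,9)$ and $L\in\Lambda\otimes\mathbb{Q}$ has $L^2\ge 0$, the Gram matrix $(E_i.E_j)$ must be non-degenerate of signature $(1,n-1)$. All off-diagonal entries are positive integers, since nef isotropic classes pair non-negatively and a vanishing pairing $E_i.E_j=0$ would produce a two-dimensional totally isotropic subspace, impossible in signature $(1,9)$. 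Computing the signatures of the candidate Gram matrices then shows that the generic matrix $J-I$ (all off-diagonal entries $1$) has signature $(1,n-1)$, that changing a single pair to $2$ (pattern (ii)) or two pairs sharing a common vertex to $2$ (pattern (iii)) preserves this signature, whereas any entry $\ge 3$, two $2$-entries on disjoint pairs, or three or more $2$-entries would push the form out of signature $(1,n-1)$ or render it degenerate. I expect this signature bookkeeping on the Gram matrices, together with checking that the greedy choice of minimal-intersection $E_i$ cannot produce the forbidden configurations, to be the crux of the argument.
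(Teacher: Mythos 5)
A preliminary remark: the paper contains no proof of this statement at all --- it is quoted as a black box from \cite[Lemma 2.12]{KL09} --- so there is no internal argument to compare yours against; I can only assess your proposal on its own merits. Your opening reduction is fine as far as it goes: effectivity of a primitive isotropic class meeting an ample divisor positively follows from Riemann--Roch on an Enriques surface, nefness is never actually needed, and the decomposition is only asked for up to numerical equivalence, so the problem does become a question about the lattice $\Num(S)\cong U\oplus E_8(-1)$. (Do note, though, that the lemma assumes nothing about $S$ being unnodal, so you should not invoke the absence of $(-2)$-curves.)

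The genuine gap is in the step you yourself identify as the crux, and it is not a missing computation but a wrong premise. You propose to show that any Gram pattern other than (i)--(iii) --- an off-diagonal entry $\geq 3$, two $2$'s on disjoint pairs, three or more $2$'s --- is impossible because it would violate the Hodge index constraint or be degenerate. That is false: such configurations exist in $U\oplus E_8(-1)$. For instance, with $e,f$ the standard isotropic basis of $U$ and $x\in E_8(-1)$ with $x^2=-6$, the classes $E_1=e$ and $E_2=e+3f+x$ are primitive, isotropic, lie in the same half of the positive cone, and satisfy $E_1.E_2=3$; similarly three pairwise independent isotropic classes with all three mutual intersections equal to $2$ have Gram matrix of signature $(1,2)$ and embed explicitly into $U\oplus E_8(-1)$, giving a decomposition $L=E_1+E_2+E_3$ with $L^2=12$ of a type the lemma forbids. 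So the lemma is an \emph{existence} statement --- some decomposition of $L$ realises one of the three patterns --- not a classification of all decompositions, and no amount of signature bookkeeping on Gram matrices can prove it; what is needed is a constructive inductive argument in which the isotropic summands are chosen, and if necessary re-chosen, to avoid the bad configurations (this is what KL09 actually do). Two further weaknesses compound this: your bound $n\leq 10$ is circular (``a linear dependence would let one lower $n$'' is not a proof that the greedy process yields independent classes), and the claim that $L-a_1E_1$ stays in the closure of the positive cone with strictly smaller square silently uses $\phi(L)^2\leq L^2$, itself a nontrivial theorem. The one piece that is sound is the converse direction: once a decomposition of type (i), (ii) or (iii) is in hand, non-degeneracy of those particular Gram matrices together with the Hodge index theorem forces $n\leq 10$; but that sits downstream of the argument you have not supplied.
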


The following proposition is crucial to our result. The fact that we can assume that the vector-bundles are contained in a short-exact sequence as in (\ref{MN-sequence}), where $M.L\geq N.L$, will eventually ensure that the dimensions of extensions of various $\fino_S(M)$ and $\fino_S(N)\otimes\finixi$ is small enough to give us the desired result (see Lemma \ref{extensions}).

\begin{Prop}\label{MN-prop}
Suppose $\finecax$ is non-$\mu_L$-stable. Then there exist line-bundles $\fino_S(M)$ and $\fino_S(N)$, and a $0$-dimensional subscheme $\xi$, such that  $\finecax$ sits inside an exact sequence
\begin{equation}\label{MN-sequence}
0\rightarrow \fino_S(M)\rightarrow\finecax\rightarrow\fino_S(N)\otimes\finixi\rightarrow 0,
\end{equation}
satisfying the following conditions:
\begin{itemize}
\item[(a)] We have $M+N\sim C$, $\lengdexi=d-M.N$, and $|N|$ is non-trivial and base-component free (implying that $h^0(S,\fino_S(N))\geq 2$). Furthermore, $h^2(S,\fino_S(M-N))=0$ and $h^0(S,\fino_S(N-M))=0$, unless $M\sim N+K_S$ or $M\sim N$, respectively.
\item[(b)] We have $h^2(S,\fino_S(M))=0$ and $h^0(S,\fino_S(M))\geq 2$.
\item[(c)] We have $h^1(S,\fino_S(M))=0$.
\item[(d)] We have $N|_C\geq A$.
\item[(e)] If $\xi\neq\emptyset$, then $h^1(S,\fino_S(N))=0$ and $N^2>0$.
\end{itemize}
\end{Prop}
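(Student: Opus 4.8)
The plan is to manufacture the sequence (\ref{MN-sequence}) from a destabilising subsheaf and then extract each property from Chern-class bookkeeping, Serre duality on the (unnodal) Enriques surface, the fact that $\finecax$ is globally generated off a finite set, and Knutsen's vanishing Theorem \ref{knutsen2007sharp}. First I would produce the sequence: non-$\mu_L$-stability yields a rank-$1$ subsheaf of $\finecax$ whose saturation is a line-bundle $\fino_S(M)$ with $M.L\geq\tfrac12 c_1(\finecax).L=\tfrac{L^2}{2}$; since the saturation has torsion-free quotient and a torsion-free rank-$1$ sheaf on a smooth surface is a twisted ideal sheaf, the quotient is $\fino_S(N)\otimes\finixi$ for a $0$-dimensional $\xi$, giving (\ref{MN-sequence}). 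Crucially I would choose $M$ with $M.L$ \emph{maximal} among all destabilising sub-line-bundles; this maximality is the lever for (b), (c) and (e). Comparing $c_1$ and $c_2$ of (\ref{MN-sequence}) with (\ref{c1})--(\ref{c2}) gives $M+N\sim C$ and $\lengdexi=d-M.N$. Because $L$ is ample and $K_S\equiv 0$, we have $(K_S-M).L=-M.L<0$, so $K_S-M$ is not effective and Serre duality gives $h^2(\fino_S(M))=0$; the same degree comparison, using $M.L\geq N.L$, yields $h^2(\fino_S(M-N))=h^0(\fino_S(K_S+N-M))=0$ and $h^0(\fino_S(N-M))=0$, with the exceptions occurring exactly when $K_S+N-M\sim 0$ (i.e.\ $M\sim N+K_S$) or $N-M\sim 0$ (i.e.\ $M\sim N$), both of which force $M.L=N.L=\tfrac{L^2}{2}$.

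For the structural part of (a): $\finecax$ is globally generated off a finite set by (\ref{E-sekvens}), and this property descends through the surjection in (\ref{MN-sequence}) to $\fino_S(N)\otimes\finixi$. Hence $|N|$ can have no fixed divisorial component, since along such a curve no section of $\fino_S(N)\otimes\finixi$ would generate, contradicting generation off a finite set. Thus $|N|$ is base-component free and in particular nef. A nonzero rank-$1$ sheaf generated off a finite set has a section, and a base-component-free class with a single section is trivial, so once I know $N\not\sim 0$ I obtain $h^0(\fino_S(N))\geq 2$; the non-triviality of $N$ I extract from (d).

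For (d) I restrict to $C$. As in the proof of Proposition \ref{coboundary}, dualising the restriction of (\ref{E-sekvens}) gives $0\to\fino_C(A)\to\finecax|_C\to\fino_C(K_C-A+K_S|_C)\to 0$ (using $-K_S|_C\sim K_S|_C$), while restricting (\ref{MN-sequence}) exhibits a quotient $\fino_C(N|_C)\otimes(\cdots)$ of $\finecax|_C$. I would study the composite $\fino_C(A)\hookrightarrow\finecax|_C\twoheadrightarrow\fino_C(N|_C)\otimes(\cdots)$: if it is nonzero it is injective between rank-$1$ torsion-free sheaves on $C$, so $N|_C-A$ is effective, which is (d); if it vanishes, then $\fino_C(A)$ factors through $\fino_C(M|_C)$, forcing $M|_C\geq A$ and a surjection from $\fino_C(K_C-A+K_S|_C)$ onto the $N$-quotient, a numerical configuration I expect to contradict the maximal choice of $M.L$ together with $k\leq d\leq g-k$. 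Showing that this composite is nonzero, i.e.\ that $A$ genuinely sits in the $N$-part, is the crux of this step; once (d) holds, $N|_C\geq A>0$ forces $N\not\sim 0$ and completes (a).

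Finally the vanishings, which I expect to be the main obstacle. For (b) and (c) together I would argue that maximality of $M.L$ forces $M^2>0$: if $M^2=0$ then, $S$ being unnodal, Theorem \ref{knutsen2007sharp} puts $M\equiv nE$ (or $nE+K_S$) with $E>0$ nef primitive, $E^2=0$, and I would use the section/extension data to embed a line-bundle of strictly larger intersection with $L$ into $\finecax$, contradicting the choice of $M$. Granting $M^2>0$, Theorem \ref{knutsen2007sharp} gives $h^1(\fino_S(M))=0$, which is (c), and Riemann--Roch on the Enriques surface gives $h^0(\fino_S(M))=\tfrac{M^2}{2}+1\geq 2$ together with $h^2(\fino_S(M))=0$, which is (b). For (e), assuming $\xi\neq\emptyset$, local-freeness of $\finecax$ constrains the extension class in $\mathrm{Ext}^1(\fino_S(N)\otimes\finixi,\fino_S(M))$ through a Cayley--Bacharach condition on $\xi$; I would show that $h^1(\fino_S(N))\neq 0$ or $N^2\leq 0$ would either split off a line-bundle (forcing $\xi=\emptyset$) or again allow an enlargement of $M.L$, hence $h^1(\fino_S(N))=0$ and $N^2>0$. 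The delicate common thread is precisely these ``enlarge $M$'' constructions that convert a cohomological pathology into a violation of maximality, and this is where the unnodal hypothesis (killing case (iii) of Theorem \ref{knutsen2007sharp}) and Lemma \ref{lemma212} enter.
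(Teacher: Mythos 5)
Your construction of the sequence (saturating a destabilising sub-line-bundle, identifying the rank-one torsion-free quotient as $\fino_S(N)\otimes\finixi$), the Chern-class bookkeeping, and the vanishings $h^2(S,\fino_S(M))=0$, $h^2(S,\fino_S(M-N))=0$, $h^0(S,\fino_S(N-M))=0$ via ampleness and Serre duality all match the paper and are fine. But the load-bearing items --- $M^2>0$ (hence (b) and (c)), part (d), and part (e) --- are exactly the ones you defer to unexecuted ``enlarge $M.L$'' contradictions against a maximal choice of $M$, and you say so yourself (``a numerical configuration I expect to contradict\dots'', ``the crux of this step''). That maximality device is both unproven and unnecessary. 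The paper gets $M^2>0$ for \emph{any} saturated destabilising $M$ by a two-line numerical argument you missed: since $\lengdexi=d-M.N\geq 0$, part (a) gives $M.N\leq d\leq g-k<g-1$, while the destabilising condition gives $M.C=M^2+M.N\geq g-1$; hence $M^2>0$. Then (c) is immediate from Theorem \ref{knutsen2007sharp} (unnodality kills case (iii), and cases (i)--(ii) force square zero), and (b) from Riemann--Roch. Note also that your invocation of Theorem \ref{knutsen2007sharp} runs backwards: $M^2=0$ does \emph{not} place $M\equiv nE$ via that theorem, which only characterises line bundles with $h^1\neq 0$.

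For (d) the paper avoids your composite-map dichotomy entirely (whose ``nonzero implies injective'' step is moreover false as stated, since $(\fino_S(N)\otimes\finixi)|_C$ can have torsion and a nonzero map from $\fino_C(A)$ could land in it). Instead: the inclusion $\fino_S(M)\hookrightarrow\finecax$ gives $h^0(S,\finecax\otimes\fino_S(-M))\geq 1$; twisting $0\rightarrow\fino_S^{\oplus 2}\rightarrow\finecax\rightarrow\fino_C(C|_C-A)\rightarrow 0$ by $\fino_S(-M)$ and using $h^0(S,\fino_S(-M))=0$ (ampleness of $L$) injects $H^0(S,\finecax\otimes\fino_S(-M))$ into $H^0(C,\fino_C(N|_C-A))$, so $N|_C\geq A$ with no case analysis. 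With (d) in hand, (e) is a one-liner rather than a Cayley--Bacharach argument: if $h^1(S,\fino_S(N))>0$ then $N^2=0$ by Theorem \ref{knutsen2007sharp}, and $N^2=0$ together with (d) gives $d\leq N.C=N.(M+N)=M.N$, contradicting $\lengdexi=d-M.N>0$. Finally, since your non-triviality of $N$ is extracted from your (gapped) proof of (d), it inherits the gap; the paper's direct argument (if $N\sim 0$ then $M.N=0$, so $\lengdexi=d>0$ and $h^0(S,\finixi)=0$, contradicting generation off a finite set) is independent of (d). So the skeleton is right, but the proofs of (b)--(e) as you propose them do not go through.
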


Note that the points where $\finecax$ is not globally generated lie along the curve $C$.

\begin{proof}
Since $\finecax$ by assumption is non-$\mu_{L}$-stable, there exists a line-bundle $\fino_S(M)$ of slope $\geq g-1$ on $C$ that injects into $\finecax$. We can assume that the injection is saturated, and so we obtain the sequence (\ref{MN-sequence}). Note that since $M.C\geq g-1$, then $N.C\leq g-1$.

We have $M+N\sim C$, $\lengdexi=d-M.N$ and $|N|$ base-point free because of (\ref{c1}) and (\ref{c2}), and the fact that $\finecax$ is gobally generated away from a finite set of points. We have $N$ non-trivial because otherwise, $M.N=0$, implying that $\xi\neq\emptyset$, and this would imply that $h^0(S,\fino_S(N)\otimes\finixi)=0$, which contradicts $\finecax$ being globally generated away from a finite set.

We have $h^2(S,\fino_S(M-N))=0$ and $h^0(S,\fino_S(N-M))=0$ by the Nakai--Moishezon criterion, using that $L$ by assumption is ample, and that $M.L\geq N.L$.

%To show that $h^2(S,\fino_S(M))=0$, we have $h^2(S,\fino_S(M))=h^0(S,\fino_S(-M+K_S))$, and since $(-M+K_S).L\leq -g+1<0$ and $L$ is ample, it follows by the Nakai--Moishezon criterion that $-M+K_S$ cannot be effective.

%The fact that $h^0(S,\fino_S(M))\geq 2$ follows from $M.N\leq d\leq g-k<g-1$, and using that $g-1\leq M.C=M^2+M.N<M^2+g-1$, yielding that $M^2>0$. The result follows by Riemann--Roch.

%To prove (b1), the assumption that $(M-N)^2\geq 0$ implies -- using Riemann--Roch -- that either $h^0(S,\fino_S(M-N))\geq 1$ or $h^2(S,\fino_S(M-N))\geq 1$.

%If $h^0(S,\fino_S(M-N))\geq 1$, we are done. So suppose that $h^2(S,\fino_S(M-N))\geq 1$. By Serre duality, this implies that $N\geq M+K_S$. Consider the exact sequence
%$$0\rightarrow \fino_S(-C)\rightarrow \fino_S\rightarrow \fino_C\rightarrow 0$$
%tensored with $\fino_S(N-M+K_S)$, and take global sections. This shows that $H^0(S,\fino_S(N-M+K_S))\hookrightarrow H^0(C,\fino_C(N-M+K_S))$, so that $h^0(C,\fino_C(N-M+K_S))\geq 1$. However, $(N-M+K_S).C\leq 0$, and so $\fino_C(N-M+K_S)\cong\fino_C$. But since $L$ is ample, then this implies that $N-M+K_S\equiv 0$. Since $N\geq M+K_S$, we then have $N\sim M+K_S$, or equivalently, $M\sim N+K_S$, as desired.

We now show part (b). Since $M.C\geq g-1$, then since $L$ is ample, $-M+K_S$ cannot be effective. It follows that $h^0(S,\fino_S(-M+K_S))=0$, and this equals $h^2(S,\fino_S(M))$ by Serre duality.

To prove that $h^0(S,\fino_S(M))\geq 2$, by part (a), we have $M.N\leq d\leq g-k<g-1$, and this gives us $g-1\leq M.C=M^2+M.N<M^2+g-1$, yielding $M^2>0$. The result now follows from Riemann--Roch.

%We have $(M-N)^2=M^2-2M.N+N^2<0$, implying that $M^2+N^2<2M.N$. Since $\fino_S(M+N)\cong L$, we have $2g-2=L^2=M^2+2M.N+N^2<4M.N$, and so $M.N\geq\frac{g}{2}$.

Part (c) follows from the fact that $M^2>0$ (proven in part (b)) together with Theorem \ref{knutsen2007sharp}.

To prove (d), note that by tensoring (\ref{MN-sequence}) with $\fino_S(-M)$ and taking global sections, we get $h^0(S,\finecax\otimes\fino_S(-M))\geq 1$. Rewrite (\ref{E-sekvens}) as
$$0\rightarrow\fino_S^{\oplus 2}\rightarrow\finecax\rightarrow \fino_C(C|_C-A)\rightarrow 0,$$
tensor with $\fino_S(-M)$ and take global sections. This gives us an injection $H^0(S,\finecax\otimes\fino_S(-M))\hookrightarrow H^0(C,\fino_C(N|_C-A))$, proving that $N|_C-A\geq 0$.

As for (e), suppose that $h^1(S,\fino_S(N))>0$. By Theorem \ref{knutsen2007sharp}, it follows that $N^2=0$. From (d), we then have $d\leq N.C=N.(M+N)=M.N$, contradicting part (a), which states that $d=M.N+\lengdexi$.
\end{proof}

The following lemma gives us an upper bound on $h^0(S,\fine)$, because of (\ref{MN-sequence}).

\begin{Lemma}\label{h1N}
Suppose that for general $(C,A)\in \finw$, the associated vector-bundle $\finecax$ is non-$\mu_L$-stable, so that we have a short-exact sequence as in Proposition \ref{MN-prop} where $M$ and $N$ are fixed. Then, for general $(C,A)$, we have $h^1(S,\finecax)\leq 2$ and $h^1(S,\fino_S(N)\otimes\finixi)\leq 2$.
\end{Lemma}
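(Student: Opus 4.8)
The plan is to reduce both inequalities to a single bound on the sections of a line bundle on $C$, and then to control that bound by a parameter count.

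First I would feed (\ref{MN-sequence}) into the long exact cohomology sequence. By Proposition \ref{MN-prop}(c) we have $h^1(S,\fino_S(M))=0$, and by part (b) we have $h^2(S,\fino_S(M))=0$; the relevant connecting maps therefore vanish and yield an isomorphism $H^1(S,\finecax)\cong H^1(S,\fino_S(N)\otimes\finixi)$. Hence the two asserted bounds coincide, and it suffices to prove $h^1(S,\finecax)\leq 2$. By (\ref{h1fine}) this equals $h^0(C,\fino_C(A+K_S|_C))$; note that since $2K_S\sim 0$ and $K_S.L=0$, the class $K_S|_C$ is $2$-torsion of degree $0$, so $\fino_C(A+K_S|_C)$ is a degree-$d$ line bundle, and the task is to show it has at most two sections for general $(C,A)\in\finw$.

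Because $h^0(C,\fino_C(A+K_S|_C))$ is upper-semicontinuous on the irreducible family $\finw$, its generic value is its minimum, so it is enough to exclude that $h^0(C,\fino_C(A+K_S|_C))\geq 3$ holds identically on $\finw$. (Concretely, the twist is seen through the \'etale double cover $p\colon\tilde C\to C$ attached to $K_S|_C$, the restriction of the K3-cover $\tilde S\to S$: from $p_*\fino_{\tilde C}=\fino_C\oplus\fino_C(K_S|_C)$ one gets $h^0(\tilde C,p^*A)=2+h^0(C,\fino_C(A+K_S|_C))$, so the target is $h^0(\tilde C,p^*A)\leq 4$.) Suppose for contradiction that $A+K_S|_C\in W^2_d(C)$ for the general member. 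Twisting by the $2$-torsion class $K_S|_C$ is an involution of the relative Picard scheme over $|L|$, so $(C,A)\mapsto(C,A+K_S|_C)$ embeds $\finw$, preserving dimensions, into the relative variety of pairs $(C,B)$ with $B\in W^2_d(C)$. By (\ref{dominates}) the general fibre $\finw_C$ has dimension $\dim W^1_d(C)$, which is $\geq d-k$ (add $d-k$ base points to a gonality pencil); hence this would force $\dim W^2_d(C)\geq d-k$ for general $C\in|L|$.

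I would close the argument by contradicting the last inequality with a gonality-refined upper bound on $\dim W^2_d(C)$ for a curve of gonality $k$ (of Coppens--Martens / Pflueger type): in the range $k\leq d\leq g-k$ with $k\geq 3$ such a bound gives $\dim W^2_d(C)<d-k$. The main obstacle is precisely this step --- ruling out that the $2$-torsion twist systematically produces a third section; everything before it is a formal cohomology computation, and the essential geometric input is the estimate $\dim W^2_d(C)<d-k$.
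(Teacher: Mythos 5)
Your reduction is exactly the paper's: the long exact sequence of (\ref{MN-sequence}) together with parts (b) and (c) of Proposition \ref{MN-prop} ties $h^1(S,\finecax)$ to $h^1(S,\fino_S(N)\otimes\finixi)$ (the paper only needs the surjection $H^1(S,\finecax)\rightarrow H^1(S,\fino_S(N)\otimes\finixi)$, which uses $h^2(S,\fino_S(M))=0$); the identification with $h^0(C,\fino_C(A+K_S|_C))$ is (\ref{h1fine}); and injectivity of the $2$-torsion twist combined with (\ref{dominates}) shows that a generic third section would force $\dim W^2_d(C)\geq \dim W^1_d(C)\geq d-k$ for $C$ general in $|L|$. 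The gap is in your final step, which you yourself flag as the main obstacle, and it is genuine: the bounds of Coppens--Martens/Pflueger/Jensen--Ranganathan type that you want to quote are theorems about \emph{general} $k$-gonal curves, i.e.\ general points of the $k$-gonal stratum of $\mathscr{M}_g$, not about arbitrary curves of gonality $k$. A general curve in $|L|$ on an Enriques surface is special in moduli, and nothing guarantees it is Brill--Noether general within its gonality stratum. Indeed, the $r=1$ instance of the bound you invoke is $\dim W^1_d(C)\leq d-k$, which is precisely Theorem \ref{main}, the statement the entire paper is written to prove --- if it were quotable, the paper would be superfluous --- and Example \ref{non-linear} exhibits smooth curves on Enriques surfaces that violate it, since there $\dim W^1_{\gon(C)}(C)\geq 1>\gon(C)-\gon(C)$. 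The bounds valid for \emph{all} curves (Martens, Mumford, Keem) give only $\dim W^2_d(C)\leq d-5$ or thereabouts, which is weaker than the needed $d-k-1$ once $k\geq 5$; and $k=2\phi(L)$ is unbounded in this paper.

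The estimate you need is instead obtained internally, and this is what the paper's laconic ``which is impossible'' refers to (it is spelled out in the remark opening Section \ref{stable-section}). Subtract a point: the map $W^2_d(C)\times C\rightarrow W^1_{d-1}(C)$, $(B,p)\mapsto B-p$, has finite general fibres in the range $d\leq g-k$ (a positive-dimensional fibre over $B'$ with $h^0(B')=2$ forces every point of $C$ to be a base point of $|K_C-B'|$, hence $h^1(B')=0$ and $d=g+2$ by Riemann--Roch; points of the image lying in $W^2_{d-1}(C)$ are handled by descending induction on the degree). Hence $\dim W^2_d(C)\geq d-k$ would yield $\dim W^1_{d-1}(C)\geq d-k+1$, contradicting the standing induction hypothesis (\ref{assumption1}) that $\dim W^1_{d-1}(C)=d-1-k$ for $C$ general in $|L|$ --- equivalently, one gets more than $\dim W^1_d(C)$ dimensions of $g^1_{d-1}$'s, contradicting $\dim W^1_d(C)\geq \dim W^1_{d-1}(C)+1$, which follows by adding base points. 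Replacing your appeal to general-gonal-curve results by this subtraction argument, which uses only the paper's own inductive setup, is what makes the proof close.
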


\begin{proof}
Note that from (\ref{dominates}), we are assuming that $\pi:\finw\rightarrow |L|$ dominates, and that for general $C\in |L|$, the fibre over $C$ has dimension $W^1_d(C)$.

Suppose $h^1(S,\fino_S(N)\otimes\finixi)\geq 3$. Then taking cohomology of (\ref{MN-sequence}), we get a surjection $H^1(S,\finecax)\rightarrow H^1(S,\fino_S(N)\otimes\finixi)\rightarrow 0$, implying that $h^1(S,\finecax)\geq 3$.

By (\ref{h1fine}), $h^1(S,\finecax)=h^0(C,\fino_C(A+K_S|_C))$, giving us $W^1_d(C)$ dimensions of $g^2_d$'s, which is impossible.
\end{proof}

The following lemma is necessary for the proof of Proposition \ref{MN-bound}, where we prove that $M.N\geq k-1$. This lemma is (in the Enriques surface case) an improvement of a similar result in \cite{Aprodu-Farkas}, where it is shown that $M|_C$ contributes to the Clifford index. By using $M|_C$ instead of $(M+E)|_C$ in Proposition \ref{MN-bound}, we would only obtain $M.N\geq k-2$.

\begin{Lemma}\label{Clifford-index}
Suppose we have a sequence as in Proposition \ref{MN-prop} with $(M-N)^2\geq 0$. If there exists a primitive elliptic curve $E$ such that $(M-N).E>0$ and $h^0(S,\fino_S(N-E+K_S))\geq 2$, then $(M+E)|_C$ contributes to $\Cliff(C)$.
\end{Lemma}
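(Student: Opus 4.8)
The plan is to verify the two defining inequalities of Definition~\ref{Cliff-def} for the line bundle $(M+E)|_C$, namely $h^0(C,\fino_C((M+E)|_C))\geq 2$ and $h^1(C,\fino_C((M+E)|_C))\geq 2$. The recurring tool is adjunction combined with the relation $M+N\sim C$ from Proposition~\ref{MN-prop}(a): since $S$ is Enriques we have $K_S\equiv 0$ and $K_C=(K_S+C)|_C=(K_S+M+N)|_C$, so intersection numbers against $L$ are controlled by $M.C=M.L\geq g-1$ and by the ampleness of $L$.

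For the $h^1$ inequality I would apply Serre duality on $C$ and rewrite, using adjunction and $M+N\sim C$,
$$h^1(C,\fino_C((M+E)|_C))=h^0(C,\fino_C(K_C-(M+E)|_C))=h^0(C,\fino_C((K_S+N-E)|_C)).$$
I then compare with sections on $S$ via the restriction sequence
$$0\rightarrow\fino_S(K_S-M-E)\rightarrow\fino_S(K_S+N-E)\rightarrow\fino_C((K_S+N-E)|_C)\rightarrow 0.$$
Its kernel is non-effective, because $(K_S-M-E).L=-M.L-E.L<0$ (using $K_S.L=0$, $M.L=M.C\geq g-1>0$, and $E.L>0$ as $E>0$ and $L$ is ample), so $h^0(S,\fino_S(K_S-M-E))=0$ and the restriction map is injective. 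Hence $h^0(C,\fino_C((K_S+N-E)|_C))\geq h^0(S,\fino_S(K_S+N-E))=h^0(S,\fino_S(N-E+K_S))\geq 2$ by hypothesis.

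For the $h^0$ inequality I would first pass sections up on $S$: as $E$ is effective, multiplication by its section gives an inclusion $H^0(S,\fino_S(M))\hookrightarrow H^0(S,\fino_S(M+E))$, whence $h^0(S,\fino_S(M+E))\geq h^0(S,\fino_S(M))\geq 2$ by Proposition~\ref{MN-prop}(b). I then restrict to $C$ through
$$0\rightarrow\fino_S(E-N)\rightarrow\fino_S(M+E)\rightarrow\fino_C((M+E)|_C)\rightarrow 0,$$
and observe that $E-N$ is non-effective: since $E$ is nef and primitive with $E^2=0$, Riemann--Roch together with $h^1(S,\fino_S(E))=0$ (Theorem~\ref{knutsen2007sharp}) and $h^2(S,\fino_S(E))=h^0(S,\fino_S(K_S-E))=0$ give $h^0(S,\fino_S(E))=1$, whereas an effective $E-N$ would force $h^0(S,\fino_S(E))\geq h^0(S,\fino_S(N))\geq 2$ (Proposition~\ref{MN-prop}(a)), a contradiction. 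Thus the restriction is injective and $h^0(C,\fino_C((M+E)|_C))\geq h^0(S,\fino_S(M+E))\geq 2$.

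Combining the two inequalities shows that $(M+E)|_C$ contributes to $\Cliff(C)$. The step requiring the most care is the vanishing of the two kernels $h^0(S,\fino_S(K_S-M-E))=0$ and $h^0(S,\fino_S(E-N))=0$, which is what makes both restriction maps injective; here the hypotheses $(M-N)^2\geq 0$ and $(M-N).E>0$ play their role by keeping $M+E$ the ``large'' summand of the decomposition (so that $M.L$ stays big and $E-N$ cannot be effective), while the essential positivity input is the ampleness of $L$ together with $h^0(S,\fino_S(M))\geq 2$, $h^0(S,\fino_S(N))\geq 2$ and the hypothesis $h^0(S,\fino_S(N-E+K_S))\geq 2$.
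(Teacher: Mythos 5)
Your proof is correct and follows essentially the same route as the paper's: both parts use the restriction sequence $0\rightarrow\fino_S(-C)\rightarrow\fino_S\rightarrow\fino_C\rightarrow 0$ twisted by $\fino_S(M+E)$ and by $\fino_S(N-E+K_S)$, with the kernels shown to be non-effective (the paper deduces $h^0(S,\fino_S(E-N))=0$ and $h^0(S,\fino_S(K_S-M-E))=0$ from $h^0(S,\fino_S(N))\geq 2$, $h^0(S,\fino_S(M))\geq 2$ and $h^0(S,\fino_S(E))=1$, while you additionally invoke intersection with the ample $L$ for the second vanishing --- an equally valid justification). Your closing remark that the hypotheses $(M-N)^2\geq 0$ and $(M-N).E>0$ are needed to keep the kernels non-effective is inaccurate (they are not used in the proof at all, only in the later application in Proposition \ref{MN-bound}), but this does not affect the validity of the argument.
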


\begin{proof}
By (\ref{nonhyperelliptic}), we have $k\geq 3$, and so there exist line-bundles on $C$ that contribute to $\Cliff(C)$.

We show that $h^i(C,\fino_S(M+E)|_C)\geq 2$ for $i=0,1$.

Consider the exact sequence
\begin{equation}\label{C-sequence}
0\rightarrow\fino_S(-C)\rightarrow \fino_S\rightarrow\fino_C\rightarrow 0
\end{equation}
tensored with $\fino_S(M+E)$, giving us
$$0\rightarrow \fino_S(-N+E)\rightarrow \fino_S(M+E)\rightarrow \fino_S(M+E)|_C\rightarrow 0.$$
Because $h^0(S,\fino_S(N))\geq 2$ by Proposition \ref{MN-prop}, we must have $h^0(S,\fino_S(-N+E))=0$. By the same proposition, it follows that $h^0(S,\fino_S(M+E))\geq 2$, and so also $h^0(C,\fino_S(M+E)|_C)\geq 2$, as desired.

We have $h^1(C,\fino_S(M+E)|_C)=h^0(C,\fino_C(K_C-M|_C-E|_C))=h^0(C,\fino_S(C+K_S-M-E)|_C)=h^0(C,\fino_S(N-E+K_S)|_C)$. By considering the sequence (\ref{C-sequence}) tensored with $\fino_S(N-E+K_S)$, we get
$$0\rightarrow \fino_S(-M-E+K_S)\rightarrow \fino_S(N-E+K_S)\rightarrow \fino_S(N-E+K_S)|_C\rightarrow 0.$$
Since $h^0(S,\fino_S(M))\geq 2$ by Proposition \ref{MN-prop}, $h^0(S,\fino_S(-M-E+K_S))=0$, and so $H^0(S,\fino_S(N-E+K_S))\hookrightarrow H^0(C,\fino_S(N-E+K_S)|_C)$. We have $h^0(S,\fino_S(N-E+K_S))\geq 2$ by assumption, and so $h^0(C,\fino_S(N-E+K_S)|_C)\geq 2$ as well.
\end{proof}

In the following proposition, we obtain a connection between $M.N$ and the generic gonality in $|L|$. This is used when we make the parameter count of extensions of $\fino_S(M)$ and $\fino_S(N)\otimes\finixi$ in the proof of Proposition \ref{non-stable-result}.

Here we use that the general curves in $|L|$ are non-exceptional. This is a consequence of \cite[Corollary 1.2]{KL13}, where we find that the Clifford index for the general curve in $|L|$ is $k-2$.

Note that Proposition \ref{MN-bound}, in the case of $N^2=4$, is the only place where we use the assumption that $\mu(L)>2\phi(L)$.

\begin{Prop}\label{MN-bound}
Suppose that for general $(C,A)$ in $\finw$, the vector-bundle $\finecax$ is non-$\mu_L$-stable. Suppose furthermore that $\mu(L)>2\phi(L)=k$. Then $M.N\geq k-1$.
\end{Prop}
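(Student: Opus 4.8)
The plan is to establish the bound $M.N \geq k-1$ by exploiting the hypothesis $k = 2\phi(L) < \mu(L)$ together with the fact that $(M+E)|_C$ contributes to the Clifford index (Lemma \ref{Clifford-index}), for a suitably chosen primitive elliptic curve $E$. Since the general curve in $|L|$ is non-exceptional, we have $\Cliff(C) = k-2$, so any divisor contributing to $\Cliff(C)$ has Clifford index at least $k-2$. The strategy is therefore to locate an elliptic $E$ satisfying the hypotheses of Lemma \ref{Clifford-index}, compute $\Cliff((M+E)|_C)$ explicitly in terms of the intersection numbers $M.N$, $N^2$, $M.E$, and $N.E$, and then convert the inequality $\Cliff((M+E)|_C) \geq k-2$ into the desired bound $M.N \geq k-1$.

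First I would set up the Clifford index computation. We have $\deg((M+E)|_C) = (M+E).C = (M+E).(M+N)$, and using $h^0(C,\fino_S(M+E)|_C) \geq 2$ and $h^1(C,\fino_S(M+E)|_C) = h^0(C,\fino_S(N-E+K_S)|_C) \geq 2$ from the lemma, Riemann--Roch on $C$ gives $h^0 - h^1 = \deg - g + 1$, so that $\Cliff((M+E)|_C) = \deg((M+E)|_C) - 2(h^0 - 1)$ can be rewritten as $2h^1 - 2 + (2-2g) + \deg + \deg = \ldots$; more cleanly, $\Cliff((M+E)|_C) = g - 1 - h^0((M+E)|_C) - h^1((M+E)|_C) + 1$ reduces everything to estimating $h^0(S,\fino_S(M+E))$ and $h^0(S,\fino_S(N-E+K_S))$ via the restriction sequences already appearing in the proof of Lemma \ref{Clifford-index}. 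The point is that both restriction maps are isomorphisms on $H^0$ (the relevant kernels vanish), so I can replace curve cohomology by surface cohomology and apply Riemann--Roch on $S$, where $h^0(S,\fino_S(D)) = 1 + \tfrac{1}{2}D^2 + h^1(S,\fino_S(D))$ for $D$ effective with $D^2 \geq 0$. With the $h^1$ terms controlled by Theorem \ref{knutsen2007sharp}, this yields an explicit expression for $\Cliff((M+E)|_C)$.

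The key selection step is choosing $E$. Using Lemma \ref{lemma212} I would decompose $L \equiv a_1 E_1 + \cdots + a_n E_n$ into primitive elliptic pencils, and then argue that since $M.L \geq N.L$, there is some $E_i =: E$ with $(M-N).E > 0$ (otherwise $(M-N).L \leq 0$, contradicting $M.L \geq N.L$ unless $M \equiv N$, a degenerate case to be handled separately). For this $E$ I must verify $h^0(S,\fino_S(N-E+K_S)) \geq 2$, which by Riemann--Roch and Theorem \ref{knutsen2007sharp} follows once $(N-E)^2 \geq 0$ and $N-E$ is effective; here I would use $\phi(N) \geq \phi(L)/\text{(something)}$ bounds and the hypothesis $k = 2\phi(L)$ to guarantee $N.E \geq 2$, hence $(N-E)^2 = N^2 - 2N.E + E^2 = N^2 - 2N.E$ stays nonnegative. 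This is where the condition $\mu(L) > 2\phi(L)$ enters in the boundary case $N^2 = 4$: the bundle $B$ with $B^2 = 4$, $\phi(B) = 2$ defining $\mu(L)$ is exactly what could obstruct the argument when $N^2 = 4$, and the hypothesis $\mu(L) > 2\phi(L)$ rules out the bad configuration.

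The main obstacle I anticipate is the verification that the chosen $E$ simultaneously satisfies all three hypotheses of Lemma \ref{Clifford-index} --- namely $(M-N)^2 \geq 0$, $(M-N).E > 0$, and $h^0(S,\fino_S(N-E+K_S)) \geq 2$ --- uniformly across the cases distinguished by the value of $N^2$ (and in particular the delicate boundary $N^2 = 4$). The inequality $(M-N)^2 \geq 0$ is not automatic and may require a separate argument using $M.L \geq N.L$ and the structure of $\Pic(S)$; I expect to split into the generic case $N^2 \geq 6$, where the pencil bounds are comfortable, and the low cases $N^2 \in \{0,2,4\}$, where I must argue case-by-case, invoking $\mu(L) > 2\phi(L)$ precisely to eliminate the $N^2 = 4$ exception. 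Once $E$ is fixed and the three hypotheses hold, the Clifford-index inequality $\Cliff((M+E)|_C) \geq k-2$ translates, after substituting the Riemann--Roch expressions and simplifying, into $M.N \geq k-1$, completing the proof.
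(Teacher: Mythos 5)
Your overall strategy for the main case is the same as the paper's: invoke Lemma \ref{Clifford-index} for a suitable elliptic curve $E$, use that the general curve is non-exceptional so $\Cliff(C)=k-2$ (by \cite[Corollary 1.2]{KL13}), and translate $\Cliff((M+E)|_C)\geq k-2$ into $M.N\geq k-1$; you also correctly place the hypothesis $\mu(L)>2\phi(L)$ at the boundary case $N^2=4$, $\phi(N)=2$. However, there is a genuine gap at the outset: the case $(M-N)^2<0$ cannot be removed. You write that $(M-N)^2\geq 0$ ``may require a separate argument using $M.L\geq N.L$,'' but $M.L\geq N.L$ only gives $M^2\geq N^2$, which is perfectly compatible with $(M-N)^2<0$ (e.g.\ $M^2=N^2=2$, $M.N=5$). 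When $(M-N)^2<0$ the hypotheses of Lemma \ref{Clifford-index} simply fail, so the Clifford-index route is unavailable and a different argument is required. The paper disposes of this case directly: $(M-N)^2\leq 0$ gives $2M.N\geq M^2+N^2$, hence $2g-2=(M+N)^2\leq 4M.N$, so $M.N\geq\frac{g-1}{2}\geq k-1$ by assumption (\ref{k-leq-g/2}). Your proposal has no substitute for this step.

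The second gap is in your selection of $E$. You decompose $L$ via Lemma \ref{lemma212} and pick some $E_i$ with $(M-N).E_i>0$, but nothing forces this particular $E_i$ to also satisfy $h^0(S,\fino_S(N-E_i+K_S))\geq 2$: the elliptic curves occurring in a decomposition of $L$ need not be subdivisors of $N$ at all. Your sketch of the verification is moreover backwards: you claim that $N.E\geq 2$ guarantees $(N-E)^2=N^2-2N.E\geq 0$, whereas increasing $N.E$ makes $(N-E)^2$ smaller, not larger. The paper decouples the two conditions, which is the key trick you are missing: since $(M-N)^2>0$, Lemma \ref{lemma212} applied to $M-N$ (not to $L$) produces a decomposition with at least two parts intersecting pairwise positively, whence $(M-N).E>0$ for \emph{every} elliptic curve $E$; this frees one to choose $E$ from a decomposition of $N$ itself so that $N-E$ is effective with $h^0(S,\fino_S(N-E+K_S))\geq 2$, the degenerate configurations ($N-E$ a stationary elliptic curve, or a sum $E_1''+E_2''$ with $2E_1''\sim 2E_2''$) being handled separately. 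Finally, the low cases $N^2=0$ and $N^2=2$ need their own arguments (via Proposition \ref{MN-prop}(d) and Lemma \ref{h1N}, respectively, in the paper); deferring them to an unspecified case-by-case analysis leaves the proof incomplete.
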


\begin{proof}
Suppose first that $(M-N)^2\leq 0$. Then $2M.N\geq M^2+N^2$, and so $2g-2=(M+N)^2=M^2+2M.N+N^2\leq 4M.N$, yielding $M.N\geq \frac{g-1}{2}$. Since $k\leq\frac{g}{2}$ by assumption (\ref{k-leq-g/2}), the result follows.

Now suppose $(M-N)^2> 0$.

We start by considering three special cases, namely $N^2=0$, $N^2=2$, and $N^2=4$ with $\phi(N)=2$.

\textbf{Special case 1.} If $N^2=0$, then we know from Proposition \ref{MN-prop} that $d\leq N.C=N.(M+N)=M.N$, and so it follows in particular that $k-1\leq M.N$.

\textbf{Special case 2.} If $N^2=2$, note that by Theorem \ref{knutsen2007sharp}, $h^1(S,\fino_S(N))=0$, so that $h^0(S,\fino_S(N))=\frac{1}{2}N^2+1=2$. Since $\finecax$ is globally generated away from a finite set of points, then so must $|\fino_S(N)\otimes\finixi|$, and so all points of $\xi$ must be along base-points of $|N|$ (or else $\dim|\fino_S(N)\otimes\finixi|\leq 0$). Since $\finecax$ is globally generated outside of $C$, this implies that the base-points of $|N|$, and hence also the points of $\xi$, must lie along $C$. However, $h^1(S,\fino_S(N)\otimes\finixi)$ indicates (in this particular case) how many points of $\xi$ that lie along base-points of $|N|$, and by Lemma \ref{h1N}, it follows that $\lengdexi\leq 2$.

If $d>k$, then this yields $2\geq \lengdexi=d-M.N>k-M.N$, which leads to $M.N\geq k-1$.

If $d=k$, then note that since $h^0(C,\fino_S(N)|_C)\geq 2$, then $N.C\geq k+\lengdexi$, or else we get a contradiction on the gonality of $C$. But this gives us $N.C=N^2+M.N=2+M.N\geq k+k-M.N$, yielding $M.N\geq k-1$.

\textbf{Special case 3.} In the case where $N^2=4$ with $\phi(N)=2$, we have by assumption that $N.C-2\geq \mu(L)\geq k+1$. Since $N.C=M.N+N^2=N.M+4$, it follows that $M.N\geq k-1$.

\textbf{General case.} Now suppose $M\gneq N$, $N^2\geq 4$ and $(M-N)^2>0$. If $N^2=4$, we suppose that $\phi(N)\neq 2$. We first show that there exists an elliptic curve $E$ such that the conditions of Lemma \ref{Clifford-index} are satisfied.

By Lemma \ref{lemma212}, $M-N\equiv a_1E_1+\cdots +a_mE_m$ for some elliptic curves $E_i$ satisfying $E_i.E_j>0$ for $i\neq j$, and integers $a_i>0$. Since $(M-N)^2>0$, we must have $m\geq 2$, and so $(M-N).E>0$ for any elliptic curve $E$.

Applying Lemma \ref{lemma212} again, we see that there exist positive integers $b_i$ and elliptic curves $E_i'$ satisfying $1\leq E_i'.E_j'\leq 2$ for $i\neq j$, such that $N\equiv b_1E_1'+\cdots +b_{m'}E_{m'}'$. This implies that $N-(b_1E_1'+\cdots +b_{m'}E_{m'}')$ is linearly equivalent to either $0$ or $K_S$. If $h^0(S,\fino_S(N-E))= 1$, where $E<N$, the only way this can happen is that $N-E$ is linearly equivalent to a stationary elliptic curve or a sum of two elliptic curves $E_1''$, $E_2''$ satisfying $2E_1''\sim 2E_2''$. But the first case implies $N^2\leq 4$ with $\phi(N)=2$, which is a contradiction; and in the second case, $N\sim E+E_1''+E_2''$ such that $h^0(S,\fino_S(N-E_i''))\geq 2$ for $i=1,2$, so that Lemma \ref{Clifford-index} can still be applied.

It follows that there exists an elliptic curve $E$ such that $(M+E)|_C$ contributes to $\Cliff(C)$. By \cite[Corollary 1.2]{KL13}, the general curve $C$ in $|L|$ has Clifford index $k-2$. Recalling from the proof of Lemma \ref{Clifford-index} that $H^0(S,\fino_S(M+E))\hookrightarrow H^0(C,\fino_C(M+E))$, we get
\begin{eqnarray*}
k-2&=&\Cliff(C)\\
&\leq&\Cliff((M+E)|_C)=(M+E).C-2(h^0(C,\fino_C(M+E))-1)\\
&\leq&M.(M+N)+E.C-(M+E)^2\\
&=&M.N+E.C-2M.E\\
&=&M.N+E.M+E.N-2M.E\\
&=&M.N-E.(M-N)\\
&\leq&M.N-1,
\end{eqnarray*}
as desired.
\end{proof}

The two following lemmas are used in the proof of Proposition \ref{non-stable-result}. The first one gives a parameter space for the extensions of $\fino_S(M)$ and $\fino_S(N)\otimes\finixi$, and is the second place in this article where we use the assumption that $L$ is ample. The second lemma is important when we count the dimensions of possible pairs $(C,A)$ that can arise from the same vector-bundle $\fine$.

\begin{Lemma}\label{extensions}
Suppose $\fino_S(M)$ and $\fino_S(N)$ are two line-bundles on $S$ satisfying $M.L\geq N.L$, and let $\xi$ be a non-empty zero-dimensional subscheme on $S$ of length $\ell> 0$. Then all isomorphism-classes of extensions of $\fino_S(M)$ and $\fino_S(N)\otimes\finixi$ are parametrised by
$$\tykkp Ext^1(\fino_S(N)\otimes\finixi,\fino_S(M))\cong \tykkp H^1(S,\fino_S(N-M+K_S)\otimes\finixi)^{\vee},$$
which has dimension
$$\ell+h^1(S,\fino_S(M-N))-h^2(S,\fino_S(M-N))-1.$$
\end{Lemma}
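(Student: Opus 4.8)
The plan is to derive the stated isomorphism from Serre duality for $\mathrm{Ext}$, and then to compute the relevant $h^1$ by means of the structure sequence of $\xi$. To begin, I would use that $\fino_S(M)$ is locally free, so that $\mathrm{Ext}^i(\fino_S(M),\mathcal{F})\cong H^i(S,\mathcal{F}\otimes\fino_S(-M))$ for any coherent $\mathcal{F}$. Combining this with Serre duality for $\mathrm{Ext}$ on the surface $S$ (dualising sheaf $\fino_S(K_S)$), i.e. $\mathrm{Ext}^1(\mathcal{F},\fino_S(M))\cong\mathrm{Ext}^1(\fino_S(M),\mathcal{F}\otimes\fino_S(K_S))^{\vee}$, applied to $\mathcal{F}=\fino_S(N)\otimes\finixi$, gives
$$\mathrm{Ext}^1(\fino_S(N)\otimes\finixi,\fino_S(M))\cong H^1(S,\fino_S(N-M+K_S)\otimes\finixi)^{\vee}.$$
Since isomorphism classes of extensions are parametrised (up to scaling) by the projectivisation of $\mathrm{Ext}^1$, the parameter space is $\tykkp H^1(S,\fino_S(N-M+K_S)\otimes\finixi)^{\vee}$, of dimension $h^1(S,\fino_S(N-M+K_S)\otimes\finixi)-1$, so it only remains to evaluate this $h^1$.

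Writing $D:=N-M+K_S$, the crucial point I would establish is the vanishing $H^0(S,\fino_S(D)\otimes\finixi)=0$. Since $2K_S\sim 0$ forces $K_S\equiv 0$, and $L$ is ample with $M.L\geq N.L$, we have $D.L=(N-M).L\leq 0$. If $D.L<0$ then $\fino_S(D)$ is not effective and the vanishing is immediate. If $D.L=0$, then any effective divisor in $|D|$ has $L$-degree $0$ and hence, by ampleness of $L$, is zero; so either $h^0(S,\fino_S(D))=0$ or $\fino_S(D)\cong\fino_S$, and in the latter case a section vanishing along the non-empty $\xi$ must be zero. This is precisely the step where ampleness of $L$ and the hypothesis $\xi\neq\emptyset$ enter.

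Finally I would twist the structure sequence $0\to\finixi\to\fino_S\to\fino_{\xi}\to 0$ by $\fino_S(D)$ and pass to cohomology. Using $H^1(S,\fino_{\xi})=0$ (as $\xi$ is $0$-dimensional) and the vanishing just proved, the long exact sequence reduces to
$$0\to H^0(S,\fino_S(D))\to H^0(S,\fino_{\xi})\to H^1(S,\fino_S(D)\otimes\finixi)\to H^1(S,\fino_S(D))\to 0,$$
whence $h^1(S,\fino_S(D)\otimes\finixi)=\ell-h^0(S,\fino_S(D))+h^1(S,\fino_S(D))$, using $h^0(S,\fino_{\xi})=\ell$. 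Applying ordinary Serre duality to $\fino_S(D)$, together with $K_S-D=M-N$, yields $h^0(S,\fino_S(D))=h^2(S,\fino_S(M-N))$ and $h^1(S,\fino_S(D))=h^1(S,\fino_S(M-N))$. Subtracting $1$ for the projectivisation then produces exactly $\ell+h^1(S,\fino_S(M-N))-h^2(S,\fino_S(M-N))-1$.

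The only genuinely non-formal ingredient is the vanishing $H^0(S,\fino_S(D)\otimes\finixi)=0$, and within it the boundary case $M.L=N.L$, where a section could a priori survive; ruling this out is exactly where ampleness of $L$ combines with $\xi\neq\emptyset$. Everything else is a routine application of Serre duality and the ideal-sheaf sequence, so I expect this vanishing to be the main (and essentially the sole) obstacle.
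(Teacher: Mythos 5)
Your proof is correct and follows essentially the same route as the paper's: projectivise $\mathrm{Ext}^1$, identify it with $H^1(S,\fino_S(N-M+K_S)\otimes\finixi)^{\vee}$, compute that $h^1$ via the twisted ideal-sheaf sequence together with the vanishing $h^0(S,\fino_S(N-M+K_S)\otimes\finixi)=0$, and finish with Serre duality. The only differences are that you derive the $\mathrm{Ext}$-duality isomorphism directly (where the paper cites Friedman) and that you spell out the boundary case $(N-M+K_S).L=0$, where $\fino_S(N-M+K_S)\cong\fino_S$ is possible and the hypothesis $\xi\neq\emptyset$ is what forces the vanishing --- a point the paper's appeal to ampleness and Nakai--Moishezon leaves implicit.
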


\begin{proof}
The isomorphism classes of extensions of $\fino_S(M)$ and $\fino_S(N)\otimes\finixi$ are parametrised by $\tykkp Ext^1(\fino_S(N)\otimes\finixi,\fino_S(M))\cong \tykkp H^1(S,\fino_S(N-M+K_S)\otimes\finixi)^{\vee}$, by \cite[pages 36 and 39]{Friedman}.

To find an expression for $h^1(S,\fino_S(N-M+K_S)\otimes\finixi)$, we tensor the exact sequence
$$0\rightarrow \finixi\rightarrow\fino_S\rightarrow\fino_{\xi}\rightarrow 0$$
with $\fino_S(N-M+K_S)$ and take global sections, yielding
\begin{multline*}
0\rightarrow H^0(S,\fino_S(N-M+K_S)\otimes\finixi)\rightarrow H^0(S,\fino_S(N-M+K_S))\rightarrow\mathbb{C}^{\ell} \\
\rightarrow H^1(S,\fino_S(N-M+K_S)\otimes\finixi)\rightarrow H^1(S,\fino_S(N-M+K_S))\rightarrow 0.
\end{multline*}
We have $h^0(S,\fino_S(N-M+K_S)\otimes\finixi)=0$ because $(N-M+K_S).L\leq 0$ by assumption, and using that $L$ is ample together with the Nakai--Moishezon criterion.

The result now follows by Serre duality.
\end{proof}

\begin{Lemma}\label{M-N}
Suppose $\fine$ is an extension of $\fino_S(M)$ and $\fino_S(N)\otimes\finixi$ so that we have a sequence as in Proposition \ref{MN-prop}. Then $h^0(S,\finefineto)\geq h^0(S,\fino_S(M-N))$.
\end{Lemma}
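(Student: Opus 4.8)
The plan is to reinterpret the left-hand side cohomologically and then build an explicit injection out of the defining extension. Since $\finefineto=\mathcal{H}om(\fine,\fine)$, we have $h^0(S,\finefineto)=\dim\Hom_{\fino_S}(\fine,\fine)$, so it suffices to produce an injective linear map from $H^0(S,\fino_S(M-N))$ into the space of global endomorphisms of $\fine$. The guiding observation is that $\fino_S(M-N)\cong\mathcal{H}om(\fino_S(N),\fino_S(M))$, so every section $s\in H^0(S,\fino_S(M-N))$ is the same datum as a sheaf homomorphism $s\colon\fino_S(N)\to\fino_S(M)$ (multiplication by $s$).

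First I would construct, for each such $s$, an endomorphism of $\fine$ by composing the maps furnished by the sequence (\ref{MN-sequence}). Explicitly, set
$$\phi_s\colon\quad\fine\twoheadrightarrow\fino_S(N)\otimes\finixi\hookrightarrow\fino_S(N)\overset{s}{\longrightarrow}\fino_S(M)\hookrightarrow\fine,$$
where the first arrow is the surjection of (\ref{MN-sequence}), the second is the canonical inclusion $\fino_S(N)\otimes\finixi\subseteq\fino_S(N)$, and the last is the inclusion of (\ref{MN-sequence}). Each $\phi_s$ is $\fino_S$-linear, hence an element of $\Hom_{\fino_S}(\fine,\fine)=H^0(S,\finefineto)$, and the assignment $s\mapsto\phi_s$ is evidently linear in $s$.

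It then remains to check injectivity of $s\mapsto\phi_s$, which is the only point requiring care. Because the inclusion $\fino_S(M)\hookrightarrow\fine$ is injective and the projection $\fine\twoheadrightarrow\fino_S(N)\otimes\finixi$ is surjective, the vanishing $\phi_s=0$ forces the composite $\fino_S(N)\otimes\finixi\overset{s}{\to}\fino_S(M)$ to be the zero map. Since $\xi$ is zero-dimensional, $\finixi$ coincides with $\fino_S$ on the dense open complement $S\setminus\supp(\xi)$, so a nonzero $s$ restricts to a nonzero multiplication map there; hence this composite can vanish only if $s=0$. Therefore $s\mapsto\phi_s$ is injective and $h^0(S,\finefineto)=\dim\Hom_{\fino_S}(\fine,\fine)\geq h^0(S,\fino_S(M-N))$, as claimed. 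The main (mild) obstacle is precisely this last density argument guaranteeing that twisting by the ideal sheaf $\finixi$ does not annihilate the multiplication map; every other step is formal manipulation of the extension.
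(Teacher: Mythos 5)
Your proof is correct, but it takes a genuinely different route from the paper's. The paper argues cohomologically: it tensors the extension (\ref{MN-sequence}) by $\finevee$ and uses left-exactness of global sections to get $h^0(S,\finefineto)\geq h^0(S,\finevee\otimes\fino_S(M))$, rewrites this by Serre duality as $h^2(S,\fine\otimes\fino_S(-M+K_S))$, extracts a surjection $H^2(S,\fine\otimes\fino_S(-M+K_S))\rightarrow H^2(S,\fino_S(N-M+K_S)\otimes\finixi)$ from the twisted extension sequence (using that $H^3$ vanishes on a surface), notes that twisting by $\finixi$ does not change $H^2$ since $\fino_{\xi}$ is supported on points, and finishes with Serre duality again; it also opens with a case split (when $M\not> N$ or $M\sim N+K_S$ the right-hand side is $\leq 1$, so the identity endomorphism already suffices). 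Your construction $s\mapsto\phi_s=\iota\circ s\circ j\circ p$ is essentially the Serre-dual, homomorphism-level version of that chain of inequalities: post-composition with $\iota$ corresponds to the paper's first step, pre-composition with $p$ to the $H^2$-surjection, and your density argument (that $j$ is an isomorphism off $\supp(\xi)$, so $s\circ j=0$ forces $s=0$) to the paper's identification $h^2(S,\fino_S(N-M+K_S)\otimes\finixi)=h^2(S,\fino_S(N-M+K_S))$. What your version buys: it is more elementary (no Serre duality, no vanishing of $H^3$), it needs no case distinction on $M-N$, and it works verbatim for any such extension on an integral smooth surface without invoking any of the auxiliary properties from Proposition \ref{MN-prop}. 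What the paper's version buys is uniformity of language with the surrounding arguments (Lemma \ref{extensions} and Proposition \ref{non-stable-result}), which are all phrased as computations and comparisons of cohomology groups.
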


\begin{proof}
If $M\not> N$ or $M\sim N+K_S$, we have $h^0(S,\fino_S(M-N))\leq 1$, so there is nothing to prove. So assume that $M\gneq N$.

Tensor the sequence
\begin{equation}\label{MN-sequence-2}
0\rightarrow \fino_S(M)\rightarrow\fine\rightarrow\fino_S(N)\otimes\finixi\rightarrow 0
\end{equation}
by $\finevee$. Taking global sections, we see that $h^0(S,\finefineto)\geq h^0(S,\finevee\otimes\fino_S(M))$. By Serre duality, we have $h^0(S,\finevee\otimes\fino_S(M))=h^2(S,\fine\otimes\fino_S(-M+K_S))$. It thus suffices to prove that $h^2(S,\fine\otimes\fino_S(-M+K_S))\geq h^0(S,\fino_S(M-N))$.

Tensor (\ref{MN-sequence-2}) with $\fino_S(-M+K_S)$. Taking cohomology, we get
$$H^2(S,\fine\otimes\fino_S(-M+K_S))\rightarrow H^2(S,\fino_S(N-M+K_S)\otimes\finixi)\rightarrow 0.$$
So we have $h^2(S,\fine\otimes\fino_S(-M+K_S))\geq h^2(S,\fino_S(N-M+K_S)\otimes\finixi)$.

But if we consider
$$0\rightarrow\finixi\rightarrow\fino_S\rightarrow\fino_{\xi}\rightarrow 0$$
tensored with $\fino_S(N-M+K_S)$ and take cohomology, we see that $h^2(S,\fino_S(N-M+K_S)\otimes\finixi)=h^2(S,\fino_S(N-M+K_S))$, which by Serre duality equals $h^0(S,\fino_S(M-N))$. The result follows.
\end{proof}

We are now ready to state and prove the main result of this section.

\begin{Prop}\label{non-stable-result}
Suppose that for general $(C,A)$ in $\finw$, the vector-bundles $\finecax$ are non-$\mu_L$-stable, and suppose that $\mu(L)>2\phi(L)=k$. Then $\dim\finw\leq g-1+d-k$.
\end{Prop}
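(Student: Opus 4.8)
The plan is to bound $\dim\finw$ by realising every $(C,A)$ in the non-$\mu_L$-stable locus as a point of an explicit parameter space built from the extension (\ref{MN-sequence}) of Proposition \ref{MN-prop}, and then to carry out a dimension count on that space. Since $L$ is ample, the conditions $M.L\geq N.L>0$ and $M+N\sim C$ force $M$ to lie in a finite set of classes (bounded intersection with $L$, together with the finitely many $2$-torsion twists by $K_S$), so $\finw$ is covered by finitely many locally closed pieces $\finw_{M,N}$, one for each numerical type occurring in Proposition \ref{MN-prop}; it suffices to bound each $\dim\finw_{M,N}\leq g-1+d-k$. For a fixed type I would introduce the incidence space of triples $(\xi,e,\Lambda)$, where $\xi$ is a length-$\lengdexi$ subscheme with $\lengdexi=d-M.N$, where $e\in\tykkp\mathrm{Ext}^1(\fino_S(N)\otimes\finixi,\fino_S(M))$ determines a bundle $\fine$ fitting in (\ref{MN-sequence}), and where $\Lambda\in\Gr(2,H^0(S,\fine))$; sending $(\xi,e,\Lambda)$ to the pair $(C_\Lambda,A_\Lambda)$ cut out by (\ref{lambda}) gives a surjection onto $\finw_{M,N}$, the surjectivity being exactly the content of Proposition \ref{MN-prop}.

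Next I would assemble the dimensions. The Hilbert scheme of $\xi$ contributes $2\lengdexi$; by Lemma \ref{extensions} the extension classes contribute $\lengdexi+h^1(S,\fino_S(M-N))-h^2(S,\fino_S(M-N))-1$; and the Grassmannian contributes $2(h^0(S,\fine)-2)$. To control $h^0(S,\fine)$ I would use Riemann--Roch on $S$ (recalling that $K_S\equiv 0$ numerically, so $\chi(\fine)=g+1-d$) together with $h^2(S,\fine)=0$ from (\ref{h1-and-h2}) and the vanishings $h^1(S,\fino_S(M))=h^2(S,\fino_S(M))=0$, $h^1(S,\fino_S(N))=0$ of Proposition \ref{MN-prop}; since the latter identify $h^1(S,\fine)$ with $h^1(S,\fino_S(N)\otimes\finixi)$, Lemma \ref{h1N} caps $h^1(S,\fine)\leq 2$ and, in the main case where $\xi$ imposes independent conditions on $|N|$, gives $h^0(S,\fine)\leq g+1-d$. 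The decisive step is the redundancy subtraction: for general $(C,A)$ the bundle $\fine=\finecax$ and the subspace $\Lambda=\Lambda_{C,A}$ are determined, while the choice of sub-line-bundle $\fino_S(M)\hookrightarrow\fine$ and the attendant automorphisms of $\fine$ sweep out a positive-dimensional fibre of the parameter map. Here Lemma \ref{M-N} enters: $h^0(S,\finefineto)\geq h^0(S,\fino_S(M-N))$ bounds this fibre below by $h^0(S,\fino_S(M-N))-1$.

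Carrying out the count, I would write $M^2+N^2=2g-2-2M.N$ and, by Riemann--Roch, $h^0(S,\fino_S(M-N))=g-2M.N+h^1(S,\fino_S(M-N))-h^2(S,\fino_S(M-N))$. Adding the three positive contributions and subtracting the redundancy, the $h^1-h^2$ corrections coming from Lemma \ref{extensions} cancel exactly against those appearing in $h^0(S,\fino_S(M-N))$, and the whole expression collapses to
$$\dim\finw_{M,N}\leq d-M.N+g-2.$$
This is $\leq g-1+d-k$ if and only if $M.N\geq k-1$, which is precisely Proposition \ref{MN-bound} (and the only place where the hypothesis $\mu(L)>2\phi(L)=k$ is used). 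Taking the maximum over the finitely many types then yields $\dim\finw\leq g-1+d-k$.

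I expect the main obstacle to be making the redundancy subtraction honest: one must show that for the general member the automorphisms of $\fine$ (equivalently, the sub-line-bundles $\fino_S(M)\hookrightarrow\fine$) genuinely move $\Lambda_{C,A}$ in a family of the full dimension predicted by Lemma \ref{M-N}, i.e. that the stabiliser of $\Lambda$ is no larger than the scalars, so that $-\big(h^0(S,\fino_S(M-N))-1\big)$ is actually attained rather than merely bounded above. Secondary points, each of which I would dispatch separately, are: the empty case $\xi=\emptyset$ (then $d=M.N$, $\fine$ is a genuine extension and the count simplifies); the degenerate numerical types $M\sim N$ and $M\sim N+K_S$ singled out in Proposition \ref{MN-prop}(a), where $h^0(S,\fino_S(N-M))$ or $h^2(S,\fino_S(M-N))$ fails to vanish; and the locus where $h^1(S,\fino_S(N)\otimes\finixi)>0$, i.e. where $\xi$ fails to impose independent conditions on $|N|$, so that the extra Grassmannian dimension permitted by Lemma \ref{h1N} must be offset by the positive codimension of such $\xi$ inside the Hilbert scheme.
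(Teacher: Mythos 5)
Your count in the principal case---indecomposable $\fine$ with $\lengdexi>0$---is essentially the paper's own proof: the same three-part parameter space (Hilbert scheme of $\xi$, extension classes via Lemma \ref{extensions}, Grassmannian of $\Lambda$'s), the same fibre subtraction via Lemma \ref{M-N}, the same collapse to $g-2+d-M.N$, and the same appeal to Proposition \ref{MN-bound}. Your ``main obstacle'' (honesty of the redundancy subtraction) is settled exactly as in the paper: tensoring (\ref{lambda}) with $\finevee$ and using $h^0(S,\finevee)=h^1(S,\finevee)=0$ identifies the fibre of the parameter map over $(C_\Lambda,A_\Lambda)$ with the surjections $\fine\rightarrow\fino_{C_\Lambda}(K_{C_\Lambda}-A_\Lambda+K_S|_{C_\Lambda})$ modulo scalars, a dense open subset of $\tykkp H^0(S,\finefineto)$, so the fibre dimension is \emph{exactly} $h^0(S,\finefineto)-1\geq h^0(S,\fino_S(M-N))-1$. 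Likewise, your offset of the $+2i$ Grassmannian excess by the codimension of the special $\xi$'s is the paper's observation $(\ast)$: the $\xi$ imposing only $\ell-i$ conditions on $|N|$ are obtained from $\eta\in S^{[\ell-i]}$ by adding base-points of $|\fino_S(N)\otimes\mathcal{I}_{\eta}|$, hence move in dimension at most $2\ell-2i$.

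However, two of your ``secondary points'' are genuine gaps, and one of them cannot be closed by the mechanism you propose. First, \emph{decomposable bundles}: the split extension corresponds to the zero class, which is excluded from $\tykkp \mathrm{Ext}^1(\fino_S(N)\otimes\finixi,\fino_S(M))$, so your parameter space misses these bundles entirely; they can occur (necessarily with $\xi=\emptyset$, since a direct summand of a locally free sheaf is locally free), and the paper devotes a separate third case to them, computing the fibre via $\finefineto\cong\fino_S^{\oplus 2}\oplus\fino_S(M-N)\oplus\fino_S(N-M)$. Second, and decisively: when $\xi=\emptyset$ but $i:=h^1(S,\fine)>0$---which happens precisely when $h^1(S,\fino_S(N))>0$, a possibility \emph{not} excluded by Proposition \ref{MN-prop}(e), since that item assumes $\xi\neq\emptyset$---there is no Hilbert scheme, hence no codimension available to offset the $+2i$. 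Your own ingredients then give $\dim\leq -\chi(\fino_S(M-N))+2(g-1-d+i)=g-2+2i$ (using $d=M.N$ and $h^2(\fino_S(M-N))=0$), which exceeds $g-1+d-k$ whenever $d\leq k+2i-2$, in particular for $d=k$. So ``the count simplifies'' is false in this subcase, and a different, geometric argument is required. The paper supplies it: $h^1(S,\fino_S(N))>0$ forces $N^2=0$ by Theorem \ref{knutsen2007sharp} (unnodality rules out case (iii)), whence $N.C=M.N=d$; combined with $N|_C\geq A$ from Proposition \ref{MN-prop}(d) and equality of degrees, this yields $N|_C=A$, so each curve receives only one $g^1_d$ from such bundles, and the corresponding locus in $\finw^1_d|L|$ has dimension at most $\dim|L|=g-1\leq g-1+d-k$.
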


\begin{proof}
By assumption, for general $(C,A)$ in $\finw$, $\finecax$ sits inside an exact sequence as in Proposition (\ref{MN-prop}). We prove the proposition by making a parameter-count of all pairs $(C,A)$ such that $\finecax$ is non-$\mu_L$-stable, making a similar construction as the one done in \cite[Section 3]{Aprodu-Farkas} in the case of non-simple vector-bundles on K3-surfaces.

We divide this proof into three cases. We first consider the case where the vector-bundles $\finecax$ are indecomposable with $\ell>0$, followed by the indecomposable case when $\ell=0$. Finally, we consider the case where the $\finecax$'s are decomposable.

\bigskip

\textbf{The case where the general $\finecax$'s are indecomposable with $\ell>0$.} Fix a line-bundle $\fino_S(N)$ such that $|N|$ is base-component free, and which satisfies the following conditions: $(L-N).L\geq N.L$, $d\geq (L-N).N$, and $d-(L-N).N\leq h^0(S,\fino_S(N))$. Set $M:=L-N$ and $\ell:=d-M.N$. Note that these conditions imply that $h^1(S,\fino_S(M))=0$, $h^0(S,\fino_S(M))\geq 2$ and $h^2(S,\fino_S(M))=0$.

Let $\tilde{\finp}_{N,\ell}$ be the family of vector-bundles that are extensions of $\fino_S(M)$ and $\fino_S(N)\otimes\finixi$ where $\xi$ is a zero-dimensional subscheme of length $\ell$. For $0\leq i\leq 2$ (see Lemma \ref{h1N}), define
\begin{multline*}
\finp_{N,\ell,i}:=\{[\fine]\in \tilde{\finp}_{N,\ell}\,|\,h^2(S,\fine)=0,\,h^1(S,\fine)=i, \\
\text{ and $\fine$ is globally generated away from a finite set of points}\}.
\end{multline*}
We can think of $\finp_{N,\ell,i}$ as extensions of $\fino_S(M)$ and $\fino_S(N)\otimes\finixi$ where $\xi$ imposes $\ell-i$ conditions on $|N|$. Note that this puts restrictions on the dimensions of possible $\xi$'s that can be considered. Whereas the Hilbert scheme $S^{[\ell]}$ parametrises all possible $\xi$'s of length $\ell$, the $\xi$'s that impose $\ell-i$ conditions on $|N|$ can be found by considering elements $\eta$ of $S^{[\ell-i]}$ and add base-points of $|\fino_S(N)\otimes \mathcal{I}_{\eta}|$. Since $\fine$ is globally generated away from a finite set of points, then $|\fino_S(N)\otimes\finixi|$ is base-component free, and so there are only a finite set of base-points in $|\fino_S(N)\otimes\mathcal{I}_{\eta}|$.

\begin{itemize}
\item[($\ast$)]It follows that there are at most $2\ell-2i$ dimensions of $\xi$'s in $S^{[\ell]}$ that
impose $\ell-i$ conditions on $|N|$.
\end{itemize}

Still following the construction of \cite[Section3]{Aprodu-Farkas}, we let $\fing_{N,\ell,i}$ be the Grassmann bundle over $\finp_{N,\ell,i}$ classifying pairs $([\fine],\Lambda)$ with $[\fine]\in\finp_{N,\ell,i}$ and $\Lambda\in\mathrm{Gr}(2,h^0(S,\fine))$. (Note that $h^0(S,\fine)=h^0(S,\fino_S(M))+h^0(S,\fino_S(N))-\ell+i$, and is thus constant.)

By assumption, we have a rational map
$$h_{N,\ell,i}:\fing_{N,\ell,i}\dashrightarrow \finw^1_d(|L|)$$
given by $h_{N,\ell,i}([\fine],\Lambda):=(C_{\Lambda},A_{\Lambda})$ (see sequence (\ref{lambda})). The dimension of each fibre of $h_{N,\ell,i}$ is found by finding the dimension of all surjections $\fine\rightarrow \fino_C(K_{C_{\Lambda}}-A_\Lambda+K_S|_{C_\Lambda})$ and subtract the dimension of all morphisms from $\fino_C(K_{D_{\Lambda}}-A_\Lambda+K_S|_{C_\Lambda})$ to itself (which is $1$).

By tensoring (\ref{lambda}) with $\finevee$ and taking global sections, we see that $h^0(C,\finevee\otimes\fino_C(K_{C_{\Lambda}}-A_\Lambda+K_S|_{C_\Lambda}))=h^0(S,\finefineto)$. Since the general morphism from $\fine$ to $\fino_C(K_{C_{\Lambda}}-A_\Lambda+K_S|_{C_\Lambda})$ is surjective, it follows that the dimension of each fibre of $h_{N,\ell,i}$ is equal to $h^0(S,\finefineto)-1$. By Lemma \ref{M-N}, this is $\geq h^0(S,\fino_S(M-N))-1$.

Letting $e$ be $h^0(S,\fine)$ for any vector-bundle $\fine$ in $\finp_{N,\ell,i}$, we conclude that $\dim\finw^1_d|L|$ is bounded by $\dim\finp_{N,\ell,i}+\dim\mathrm{Gr}(2,e)-h^0(S,\fino_S(M-N))+1$.

By ($\ast$) combined with Lemma \ref{extensions}, and using that $\ell=d-M.N$, we have $\dim\finp_{N,\ell,i}\leq 2\ell-2i+\ell+h^1(S,\fino_S(M-N))-h^2(S,\fino_S(M-N))-1=3d-3M.N-2i+h^1(S,\fino_S(M-N))-h^2(S,\fino_S(M-N))-1$. We furthermore have $\dim\mathrm{Gr}(2,e)=2(e-2)=2e-4=2\chi(S,\fine)+2i-4=2(g+1-d)+2i-4=2g-2d+2i-2$.

This gives us in total
\begin{multline*}
\dim\finw\leq 3d-3M.N-2i-\chi(S,\fino_S(M-N))-1+2g-2d+2i-1\\
=2g-3M.N+d-2-\chi(S,\fino_S(M+N))+2M.N\\=2g-M.N+d-2-g
=g-2+d-M.N.
\end{multline*}
By Proposition \ref{MN-bound}, $M.N\geq k-1$, and it follows that
$$\dim\finw\leq g-1+d-k.$$

\bigskip

%\textbf{The case where the general $\finecax$'s are indecomposable with $\ell>0$ and $(M-N)^2<0$.} In this case, we construct a similar family $\finp_{N\ell,i}$ of vector-bundles as in the previous case, but with the condition that $(L-2N)^2<0$ and without the condition that $h^1(S,\fino_S(L-N))=0$. We still have $d\geq (L-N).N$ and $d-(L-N).N\leq h^0(S,\fino_S(N))$. As before, set $M:=L-N$ and $\ell:=d-M.N$. Define $\tilde\finp_{N,\ell,i}$ and $\finp_{N,\ell,i}$ as before.

%The difference from the first case is now that the property ($\ast$) no longer applies, and so we must assume that there are $2\ell$ dimensions of $\xi$'s to take into consideration, while still assuming that $h^0(S,\fine)$ for the vector-bundles $\fine$ is given by $e=\chi(\fine)+i=g+1-d+i$. We thus get that $\dim\finw^1_d|L|$ is bounded by $\dim \finp_{N,\ell,i}+\dim\mathrm{Gr}(2,e)-h^0(S,\fino_S(M-N))-1$, where $\dim\finp_{N,\ell,i}\leq 3d-3M.N+h^1(S,\fino_S(M-N))-h^2(S,\fino_S(M-N))-1$ and $\dim\mathrm{Gr}(2,e)=2g-2d+2i-2$. In total, this gives us
%$$\dim\finw^1_d|L|\leq g-2+d-M.N+2i.$$

%Now, by Proposition \ref{MN-prop} (b2), $M.N\geq\frac{g}{2}$. Furthermore, we still have from Lemma \ref{h1N} that $h^1(S,\fine)\leq 2$, and so we get an upper bound

%$$\dim \finw^1_d|L|\leq \frac{g}{2}+d+2.$$

%This is $\leq g-1+d-k$ whenever $k\leq \frac{g-6}{2}$. However, we have by assumption that $k=\phi(L)$, which is $\leq 2\sqrt{L^2}=2\sqrt{2g-2}$, and $\frac{g-6}{2}\leq 2\sqrt{2g-2}$ for $g\geq 43$.

%\bigskip

\textbf{The case where the general $\finecax$'s are indecomposable with $\ell=0$.} In this case, we also construct the same family $\finp_{N,\ell,i}=\finp_{N,0,i}$ of vector-bundles as in the previous case.  By \cite{Friedman}, all extensions of $\fino_S(M)$ and $\fino_S(N)$ are parametrised by $\tykkp H^1(S,\fino_S(N-M+K_S))^{\vee}=\tykkp H^1(S,\fino_S(M-N))$. As in the previous case, we consider the same family of vector-bundles $\finp_{N,0,i}$, where $i\leq 2$, together with the grassmannian bundle $\fing_{N,0,i}$. This gives us the bound
$$\dim \finw^1_d|L|\leq h^1(S,\fino_S(M-N))-1+\dim\mathrm{Gr}(2,e)-h^0(S,\fino_S(M-N))+1,$$
where $e=h^0(S,\fine)$ for the extensions $\fine$ with $h^1(S,\fine)=i$.

Since there are no indecomposable extensions of $\fino_S(M)$ and $\fino_S(N)$ when $M\sim N+K_S$ (since then $h^1(S,\fino_S(M-N))=0$), we can by Proposition \ref{MN-prop} (a) assume that $h^2(S,\fino_S(M-N))=0$. We have, as before, $\dim\mathrm{Gr}(2,e)=2g-2d+2i-2$, and $\chi(\fino_S(M-N))=g-2M.N$.

Note that since $\ell=d-M.N$, we have $d=M.N$ in this case. It follows that
$$\dim\finw^1_d|L|\leq -\chi(\fino_S(M-N))+2g-2d+2i-2+1=-g+2M.N+2g-2d+2i-1=g-1+2i.$$

Now, if $i=0$, we are done. So suppose $i>0$. Since $h^1(S,\fino_S(M))=0$ (by Proposition \ref{MN-prop}), it follows that $h^1(S,\fino_S(N))=i$, and so by Theorem \ref{knutsen2007sharp}, $N^2=0$. But then, $N.C=N.M=d$, and since $N|_C\geq A$ (by Proposition \ref{MN-prop}), it follows that these vector-bundles only yield one single $g^1_d$ for each curve $C$.

\bigskip

%\textbf{The case where the general $\finecax$'s are indecomposable with $\ell=0$ and $(M-N)^2<0$.} We construct the family $\finp_{N,\ell,i}=\finp_{N,0,i}$ as in the second case. The extensions are parametrised by $\tykkp H^1(S,\fino_S(M-N))$. As in the previous case, we have the bound
%$$\dim \finw^1_d|L|\leq h^1(S,\fino_S(M-N))-1+\dim\mathrm{Gr}(2,e)-h^0(S,\fino_S(M-N))+1,$$
%where $e=h^0(S,\fine)$ for the extensions $\fine$ with $h^1(S,\fine)=i$.

%In this case, we also have $h^2(S,\fino_S(M-N))=0$, since $h^2(S,\fino_S(M-N))=h^0(S,\fino_S(N-M+K_S))$, and from the proof of Proposition \ref{MN-prop}, we have $(N-M+K_S).L\leq 0$. Since $L$ is ample, then $N-M+K_S$ cannot be effective. Using the same calculations as in the previous case, we get
%$$\dim \finw^1_d|L|\leq g-1+2i.$$

%We have $d=M.N\geq \frac{g}{2}$. Since $k=2\phi\leq2\sqrt{L^2}=2\sqrt{2g-2}$, then $d-k>3$. It follows that $d\geq k+4$, as desired.

%\bigskip

\textbf{The case where the general $\finecax$'s are decomposable.} Now suppose $\finecax$ is decomposable for general $(A,C)$. In that case, we must have $\ell=0$, and so $M.N=d$. Note also that there can only be finitely many different $\finecax$ in this case, and so we will here show that the image of the map $f_{\fine}:\mathrm{Gr}(h^0(S,\fine),2)\dashrightarrow \finw^1_d|L|$ is of dimension at most $g-1$, thus implying that $\dim W^1_d(C)=0$ (given the assumptions in the proposition).

As argued in the indecomposable case, we have
$$\dim\mathrm{im}\,f_{\fine}=2(h^0(S,\fine)-2)-h^0(S,\finefineto)+1.$$
Since $\fine$ is decomposable, we have $\finefineto\cong\fino_S^{\oplus 2}\oplus \fino_S(M-N)\oplus\fino_S(N-M)$.

By Proposition \ref{MN-prop}, we have either $M\sim N+K_S$ or $h^2(S,\fino_S(M-N))=0$.

If $M\sim N+K_S$, then both $M^2>0$ and $N^2>0$, and so $h^1(S,\fine)=h^1(S,\fino_S(M))+h^1(S,\fino_S(N))=0$, by Theorem \ref{knutsen2007sharp}. In this case, we have $h^0(S,\fine)=g-d+1$ and $d=M.N=\frac{g-1}{2}$, and so
$$\dim\mathrm{im}\,f_{\fine}=2g-2\cdot\frac{g-1}{2}-3-h^0(M-N)-h^0(N-M)\leq g-2.$$

If $h^2(S,\fino_S(M-N))=0$, then let $i=h^1(S,\fine)$. As in the case where the $\fine$'s are indecomposable with $\ell=0$, we also here get $N^2=0$ if $i>0$, and hence that $N|_C=A$. We thus get one single $g^1_d$ for each curve $C$.

%Note that by Lemma \ref{h1N}, we must have $h^1(S,\fino_S(N))\leq 2$ if $f_{\fine}$ is to dominate, and since $h^1(S,\fino_S(M))=0$ by Proposition \ref{MN-prop}, we have $h^1(S,\fino_S(N))=h^1(S,\fine)=i\leq 2$.

%We prove that $i\neq 2$. If $h^1(S,\fino_S(N))=2$, then by Theorem \ref{knutsen2007sharp}, for some elliptic curve $E$, we must have either $N=4E$, $N=5E$, $N=5E+K_S$, or $N=6E+K_S$. However, we have $h^1(S,\finevee)=0$, and $\finevee=\fino_S(-M)\oplus\fino_S(-N)$, implying that $h^1(S,\finevee)\geq h^1(S,\fino_S(-N))=h^1(S,\fino_S(N+K_S))$, which then must be $\geq 1$, a contradiction.

Now suppose $i=0$. This implies that $h^0(S,\fine)= g-d+1$, and so
$$\dim\mathrm{im}\,f_{\fine}\leq 2(g-d-1)-h^0(S,\fino_S(M-N))-1.$$
Using Riemann--Roch together with the assumption that $h^2(S,\fino_S(M-N))=0$, we get $h^0(S,\fino_S(M-N))\geq \frac{1}{2}C^2-2M.N+1=g-2M.N=g-2d$, which gives us
$$\dim\mathrm{im}\,f_{\fine}\leq g-3.$$
\end{proof}

\section{The case where the $\finecax$'s are $\mu_{L}$-stable}\label{stable-section}

In this section, we cover the cases where $\finecax$ is $\mu_{L}$-stable for general $(C,A)$ in $\finw$. It is here not possible to do a parameter count in order to obtain a suitable bound, but we prove here instead that $\dim\ker(\mu_{0,A})\leq 2$, yielding that $\dim W^1_d(C)=d-k$ for $d\leq g-k$ for the curves in question.

Note that, by Assumption (\ref{dominates}), we cannot have $h^0(C,\fino_C(A+K_S|_C))\geq 3$ for general $(C,A)$ in $\finw$, since otherwise, by subtracting points, we would have more than $\dim W^1_d(C)$ dimensions of $g^1_{d-1}$'s, which is impossible. In the following propositions, we therefore only need to consider the cases where $h^0(C,\fino_C(A+K_S|_C))=2$ or $\leq 1$, respectively.

\begin{Prop}\label{H1Eleq1}
Suppose that $\finecax$ is $\mu_L$-stable, and that $h^0(C,\fino_C(A+K_S|_C))=2$, for general $(C,A)$ in $\finw$. Then $\dim \finw\leq g-1+d-k$.
\end{Prop}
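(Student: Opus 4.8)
The plan is to turn the bound on $\dim\finw$ into the single numerical inequality $\dim\ker\mu_{0,A}\le 2$ and then to extract this from the stability of $\finecax$. Since $\pi\colon\finw\to|L|$ dominates, by Assumption (\ref{dominates}) we have $\dim\finw=\dim|L|+\dim W^1_d(C)=(g-1)+\dim W$, where $W\subseteq W^1_d(C)$ is the component through a general $A$. At a general, hence smooth, point $[A]$ of $W$ the equality (\ref{rho-equality}) gives $\dim W=\dim T_{[A]}W^1_d(C)=\rho(g,1,d)+\dim\ker\mu_{0,A}=(2d-g-2)+\dim\ker\mu_{0,A}$. Hence it suffices to prove $\dim\ker\mu_{0,A}\le 2$: this would give $\dim W\le 2d-g\le d-k$, the last step being exactly the hypothesis $d\le g-k$ of (\ref{k-leq-g/2}), and therefore $\dim\finw\le g-1+d-k$.

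By Proposition \ref{coboundary}, for $(C,A)$ general in $\finw$ we have $\dim\ker\mu_{0,A}=h^0(C,\finecaxvee\otimes\fino_C(K_C-A))$, so I would set $\mathcal{G}:=\finecaxvee|_C\otimes\fino_C(K_C-A)$ and aim at $h^0(C,\mathcal{G})\le 2$. The entry point is a clean consequence of stability: tensoring (\ref{E-sekvens}) by $\finecaxvee$ and passing to cohomology, the vanishings $h^0(S,\finecaxvee)=h^1(S,\finecaxvee)=0$ of (\ref{h1-and-h2}) collapse the long exact sequence to an isomorphism $H^0(S,\finefineto)\xrightarrow{\ \sim\ }H^0\!\big(C,\finecaxvee|_C\otimes\fino_C(K_C-A+K_S|_C)\big)$. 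As $\finecax$ is $\mu_L$-stable it is simple, so the left side is one-dimensional, giving $h^0(C,\mathcal{H})=1$ for $\mathcal{H}:=\finecaxvee|_C\otimes\fino_C(K_C-A+K_S|_C)=\mathcal{G}\otimes\fino_C(K_S|_C)$.

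Thus everything reduces to bounding the effect of the nontrivial two-torsion twist by $\eta:=K_S|_C$, i.e.\ to showing $h^0(\mathcal{H}\otimes\eta)\le h^0(\mathcal{H})+1=2$. Here I would separate two cases. If $\finecax\cong\finecax\otimes\fino_S(K_S)$, then $\finecaxvee|_C\cong\finecaxvee|_C\otimes\eta$, so $\mathcal{G}\cong\mathcal{H}$ and $h^0(\mathcal{G})=1$ outright. If $\finecax\not\cong\finecax\otimes\fino_S(K_S)$, stability forces $\Hom(\finecax,\finecax\otimes\fino_S(K_S))=0$; I would then tensor (\ref{E-sekvens}) by $\finecaxvee\otimes\fino_S(K_S)$ and take cohomology, using $h^0(S,\finecaxvee\otimes\fino_S(K_S))=h^2(S,\finecax)=0$ to obtain an injection of $H^0(C,\mathcal{G})$ into the image of a connecting map $\delta$ landing in $H^1(S,\finecaxvee\otimes\fino_S(K_S))\otimes H^0(C,\fino_C(A))^{\vee}\cong H^1(S,\finecax)^{\vee}\otimes\tykkc^{2}$. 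Finally I would invoke the standing hypothesis $h^1(S,\finecax)=h^0(C,\fino_C(A+K_S|_C))=2$ from (\ref{h1fine}), together with the structure of $\delta$, to cut the dimension of its image from the a priori bound $4$ down to $2$.

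The main obstacle is exactly this last reduction. On a K3 surface the argument is immediate: there $K_S=0$, the bundle $\mathcal{H}$ controlled by simplicity already equals the bundle $\mathcal{G}$ one needs, and $\dim\ker\mu_{0,A}=h^0(\finefineto)=1$ falls out with no further work. On an unnodal Enriques surface the two bundles genuinely differ by the nontrivial class $\eta$, and an abstract two-torsion twist can raise $h^0$ without bound; the real content of the proposition is that for the Lazarsfeld--Mukai bundle, with $h^1(S,\finecax)$ pinned to the value $2$, this jump is at most one. Turning the study of the connecting map $\delta$ into a rigorous bound $\dim\operatorname{im}\delta\le 2$ is where the hypothesis $h^0(C,\fino_C(A+K_S|_C))=2$ and the $\mu_L$-stability are used essentially, and is the step that carries the weight of the proof.
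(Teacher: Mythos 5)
Your reduction to $\dim\ker\mu_{0,A}\le 2$ and your Case A (where $\finecax\cong\finecax\otimes\fino_S(K_S)$) are fine, but Case B contains a genuine gap, and it is the case that carries all the weight, since for a general stable bundle there is no reason to have $\finecax\cong\finecax\otimes\fino_S(K_S)$. In Case B your exact sequence only yields an injection
$$H^0\bigl(C,\finecaxvee|_C\otimes\fino_C(K_C-A)\bigr)\hookrightarrow H^1\bigl(S,\finecaxvee\otimes\fino_S(K_S)\bigr)^{\oplus 2},$$
and since $h^1(S,\finecaxvee\otimes\fino_S(K_S))=h^1(S,\finecax)=h^0(C,\fino_C(A+K_S|_C))=2$ under the hypothesis of the proposition, this gives only $\dim\ker\mu_{0,A}\le 4$, hence $\dim\finw\le g+1+d-k$, which is too weak. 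You acknowledge that cutting the image of the connecting map from $4$ down to $2$ is the real content, but you offer no argument for it; this is exactly the point where the argument of Proposition \ref{stable-result} (which your Case B reproduces) stops working once $h^1(S,\finecax)=2$, and it is precisely why the paper treats this case by a different method rather than by analysing the connecting map.

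The paper's key idea, absent from your proposal, is to use the hypothesis $h^0(C,\fino_C(A+K_S|_C))=2$ constructively rather than as an obstruction: the adjoint pencil $A':=A+K_S|_C$ is itself a $g^1_d$, base-point free for general $(C,A)$ by (\ref{base-points}), so it has its own Lazarsfeld--Mukai bundle $\fine_{C,A'}$, whose defining sequence has quotient $\fino_C(K_C-A'+K_S|_C)=\fino_C(K_C-A)$ --- the two canonical twists cancel. Tensoring that sequence by $\finecaxvee$ and using $h^0(S,\finecaxvee)=h^1(S,\finecaxvee)=0$ together with Proposition \ref{coboundary} gives $\dim\ker\mu_{0,A}=h^0(S,\fine_{C,A'}\otimes\finecaxvee)=\dim\Hom(\finecax,\fine_{C,A'})$. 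If $\fine_{C,A'}$ is non-$\mu_L$-stable for general $(C,A)$, Proposition \ref{non-stable-result} applied to the pairs $(C,A')$ already bounds $\dim\finw$; if it is $\mu_L$-stable, then $\finecax$ and $\fine_{C,A'}$ are two stable bundles of the same slope, so $\Hom(\finecax,\fine_{C,A'})$ vanishes unless they are isomorphic, in which case it is one-dimensional by simplicity. Either way $\dim\ker\mu_{0,A}\le 1$, a bound your approach cannot reach. In short: the missing idea is to replace the two-torsion twist $\finecax\otimes\fino_S(K_S)$, about which stability says too little, by the second Lazarsfeld--Mukai bundle $\fine_{C,A+K_S|_C}$, about which it says exactly enough.
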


\begin{proof}
%Note that by (\ref{dominates}), we are assuming that the map $\pi:\finw\rightarrow |L|$ dominates such that the general fibre over a curve $C$ has dimension equal to $\dim W^1_d(C)$. By (\ref{h1fine}), we have $h^1(S,\finecax)=h^0(C,\fino_C(A+K_S|_C))$.
%Now, if $h^0(C,\fino_C(A+K_S|_C))\geq 3$ for $(C,A)$ general in $\finw$, then by subtracting points, it follows that we get more than $\dim W^1_d(C)$ dimensions of $g^1_{d-1}$'s, which is impossible.

If $h^0(C,\fino_C(A+K_S|_C))=2$ for the general pairs $(C,A)$ in $\finw$, then by (\ref{base-points}), we can assume that $\fino_C(A+K_S|_C)$ is also base-point free for general $A$, and so these pairs $(C,A+K_S|_C)$ define vector-bundles $\fine_{C,A+K_S|_C}$. If these vector-bundles are non-$\mu_L$-stable for general $(C,A)$ in $\finw$, then by Proposition \ref{non-stable-result}, we get at most $g-1+d-k$ dimensions of pairs $(C,A+K_S|_C)$, and so there must also be at most that many dimensions of pairs $(C,A)$. So suppose the vector-bundles are $\mu_L$-stable.

The vector-bundles $\fine_{C,A+K_S|_C}$ lie inside a sequence
$$0\rightarrow H^0(S,\fino_S(A+K_S|_C))^{\vee}\otimes\fino_S\rightarrow \fine_{C,A+K_S|_C}\rightarrow \fino_C(K_C-A)\rightarrow 0.$$
Now, tensoring this sequence with $\finecaxvee$ and taking global sections, we get
\begin{multline*}
0\rightarrow H^0(S,\finecaxvee)^{\oplus 2}\rightarrow H^0(S,\fine_{C,A+K_S|_C}\otimes\finecaxvee)\rightarrow H^0(C,\fino_C(K_C-A)\otimes\finecaxvee)\\
\rightarrow H^1(S,\finecaxvee)^{\oplus 2}.
\end{multline*}
Since $h^0(S,\finecaxvee)=h^1(S,\finecaxvee)=0$, then by Proposition \ref{coboundary}, $h^0(S,\fine_{C,A+K_S|_C}\otimes\finecaxvee)=\dim\ker\mu_{0,A}$.

Now, suppose first that $\finecax\cong\fine_{C,A+K_S|_C}$. Since we are assuming stability, then it follows that the vector-bundles are simple, and so $h^0(S,\finefine)=1$, and it follows that $\dim\ker\mu_{0,A}= 1$. By (\ref{rho-equality}), $\dim W^1_d(C)= -g+2d-1$, and by putting $d\leq g-k$, we have $\dim W^1_d(C)\leq d-k-1$.

Now assume that $\finecax\ncong\fine_{C,A+K_S|_C}$. Since both $\finecax$ and $\fine_{C,A+K_S|_C}$ are $\mu_L$-stable, then (noting that $\mu_L(\finecax)=\mu_L(\fine_{C,A+K_S|_C})$), we have $h^0(S,\fine_{C,A+K_S|_C}\otimes\finecaxvee)=0$, and so $\dim\ker\mu_{0,A}=0$, and $\dim W^1_d(C)=d-k$ by (\ref{rho-equality}) and (\ref{rho-lg}).
\end{proof}

\begin{Prop}\label{stable-result}
Suppose that $\finecax$ is $\mu_L$-stable, and that $h^0(C,\fino_S(A+K_S|_C))\leq 1$, for general $(C,A)$ in $\finw$. Then $\dim\ker\mu_{0,A}\leq 2$. It follows that if $d\leq g-k$, then $\dim\finw\leq g-1+d-k$.
\end{Prop}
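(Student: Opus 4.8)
The plan is to reduce everything to the asserted inequality $\dim\ker\mu_{0,A}\le 2$ and then push it through the tangent-space formula. Indeed, once $\dim\ker\mu_{0,A}\le 2$ is known, (\ref{rho-equality}) for $r=1$ (where $\rho(g,1,d)=2d-g-2$) gives, at a general $[A]$,
$$\dim W^1_d(C)\le\dim T_{[A]}W^1_d(C)=\rho(g,1,d)+\dim\ker\mu_{0,A}\le 2d-g.$$
Since $d\le g-k$ is exactly equivalent to $2d-g\le d-k$, this yields $\dim W^1_d(C)\le d-k$. As $\pi:\finw\rightarrow|L|$ dominates and its general fibre has dimension $\dim W^1_d(C)$ by (\ref{dominates}), while $\dim|L|=g-1$, I then obtain $\dim\finw\le g-1+d-k$. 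Thus the whole statement rests on proving $\dim\ker\mu_{0,A}\le 2$.

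For the key bound I would start from Proposition \ref{coboundary}, which identifies $\dim\ker\mu_{0,A}=h^0(C,\finecaxvee\otimes\fino_C(K_C-A))$, i.e. the global sections on $S$ of this sheaf supported on $C$. The idea is to realise $\fino_C(K_C-A)$ as a cokernel on $S$: tensoring (\ref{E-sekvens}) by $\fino_S(K_S)$ and using $2K_S|_C\sim 0$ produces
$$0\rightarrow\fino_S(K_S)^{\oplus 2}\rightarrow\finecax\otimes\fino_S(K_S)\rightarrow\fino_C(K_C-A)\rightarrow 0.$$
Tensoring this by the locally free sheaf $\finecaxvee$ and taking cohomology, together with the vanishing $h^0(S,\finecaxvee\otimes\fino_S(K_S))=0$ (obtained by tensoring the defining sequence (\ref{F-sekvens}) of $\finecaxvee=\finfca$ by $\fino_S(K_S)$ and using $h^0(S,\fino_S(K_S))=h^1(S,\fino_S(K_S))=0$), I arrive at
$$\dim\ker\mu_{0,A}\le h^0(S,\finecax\otimes\finecaxvee\otimes\fino_S(K_S))+2\,h^1(S,\finecaxvee\otimes\fino_S(K_S)).$$

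The two terms are then controlled as follows. The same twist of (\ref{F-sekvens}) gives $h^1(S,\finecaxvee\otimes\fino_S(K_S))=h^0(C,\fino_C(A+K_S|_C))$, the $K_S$-twisted analogue of (\ref{h1fine}), which is $\le 1$ by hypothesis. For the first term, $h^0(S,\finecax\otimes\finecaxvee\otimes\fino_S(K_S))=\dim\Hom(\finecax,\finecax\otimes\fino_S(K_S))$; since $\finecax$ is $\mu_L$-stable and $K_S$ is numerically trivial, $\finecax\otimes\fino_S(K_S)$ is $\mu_L$-stable of the same slope, so any nonzero homomorphism between them is an isomorphism. I expect this self-isomorphic alternative to be the main obstacle, and the crucial observation is that it is self-correcting. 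Either $\Hom(\finecax,\finecax\otimes\fino_S(K_S))=0$, in which case $\dim\ker\mu_{0,A}\le 0+2\cdot 1=2$; or $\finecax\cong\finecax\otimes\fino_S(K_S)$, and then the first term equals $1$ by simplicity of the stable bundle. But in the latter case, dualising gives $\finecaxvee\otimes\fino_S(K_S)\cong\finecaxvee$, whence $h^1(S,\finecaxvee\otimes\fino_S(K_S))=h^1(S,\finecaxvee)=0$ by (\ref{h1-and-h2}), so the second term vanishes and $\dim\ker\mu_{0,A}\le 1+0=1$. In either case $\dim\ker\mu_{0,A}\le 2$, which by the first paragraph completes the proof.
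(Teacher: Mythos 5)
Your proof is correct and follows essentially the same route as the paper's: both arguments twist (\ref{E-sekvens}) by $\finecaxvee\otimes\fino_S(K_S)$, take cohomology, bound the resulting $H^1$-term by $2\,h^0(C,\fino_C(A+K_S|_C))\leq 2$ using Serre duality (equivalently, your $K_S$-twisted analogue of (\ref{h1fine})), and conclude via Proposition \ref{coboundary} and (\ref{rho-equality}). The one place where you diverge is in fact an improvement: the paper dismisses $h^0(S,\finecax\otimes\finecaxvee\otimes\fino_S(K_S))$ as zero ``by the stableness assumption,'' whereas stability of $\finecax$ (hence of $\finecax\otimes\fino_S(K_S)$, which has the same $\mu_L$-slope since $K_S$ is numerically trivial) only forces this space to be either $0$ or one-dimensional, the latter occurring precisely when $\finecax\cong\finecax\otimes\fino_S(K_S)$. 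Your dichotomy covers that residual case cleanly: if $\finecax\cong\finecax\otimes\fino_S(K_S)$, then $\finecaxvee\otimes\fino_S(K_S)\cong\finecaxvee$, whose $h^1$ vanishes by (\ref{h1-and-h2}), so the total bound becomes $1+0\leq 2$. In this sense your argument is not merely equivalent to the paper's but closes a small gap in its proof.
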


\begin{proof}
Tensoring the sequence (\ref{E-sekvens}) by $\finecax^{\vee}\otimes\fino_S(K_S)$ and taking cohomology, one gets
\begin{multline*}
0\rightarrow H^0(S,(\finecax^{\vee}\otimes\fino_S(K_S)))^{\oplus 2}\rightarrow H^0(S,\finefine\otimes\fino_S(K_S))\rightarrow H^0(C,\finecax^{\vee}\otimes\fino_C(K_C-A))\\
\rightarrow H^1(S,(\finecaxvee\otimes\fino_S(K_S))^{\oplus 2}).
\end{multline*}
We have $H^0(S,\finefine\otimes\fino_S(K_S))=0$ by the stableness assumption, and so it follows that $H^0(C,\finecax^{\vee}\otimes\fino_C(K_C-A))$ injects into $H^1(S,(\finecaxvee\otimes\fino_S(K_S))^{\oplus 2})$.

Since $h^1(S,\finecaxvee\otimes\fino_S(K_S))=h^1(S,\finecax)=h^0(C,\fino_C(A+K_S|_C))$ (by (\ref{h1fine})), which by assumption is $\leq 1$, we have $h^1(C,\finecax^{\vee}\otimes\fino_C(K_C-A))\leq 2$. By Proposition \ref{coboundary}, $\dim\ker\mu_{0,A}\leq 2$.

By (\ref{rho-equality}), it follows that if $\dim \ker\mu_{0,A}\leq 2$ for $[A]$ general in a component $W$ of $W^1_d(C)$, then $\dim W=-g+2d$. By putting $d\leq g-k$, the result follows.
\end{proof}

\begin{Bemerkning}
It is interesting to note that the same result can be obtained by considering the moduli-space $M$ of $\mu_L$-stable vector-bundles of rank $2$ with $c_1=L$ and $c_2=d$ on $S$. It is known (see e.g.\ \cite[Remark, page 768]{kim2006stable}) that the dimension of the tangent space at $\fine$ is given by
$$\dim T_{\fine}M=4c_2-c_1^2-3+h^2(S,\finefineto).$$
Since $h^2(S,\finefineto)=0$ in our case, it follows that the dimension is given by $4d-L^2-3=4d-2g-1$.

By considering all possible injections $\Lambda\hookrightarrow H^0(S,\fine)$, as done in the proof of Proposition \ref{non-stable-result}, we obtain $\dim\finw\leq 2d-1$ using this approach, or equivalenty, $\dim W^1_d(C)\leq 2d-g$. We have $2d-g\leq d-k$ precisely when $d\leq g-k$.
\end{Bemerkning}

\begin{proof}[Proof of Theorem \ref{main}]
Suppose that $\pi:\finw\rightarrow |L|$ dominates. From (\ref{base-points}), we can assume that for general $(C,A)\in \finw$, we have that $|A|$ is base-point free. We can therefore for these $(C,A)$ consider vector-bundles $\finecax$.

If for general $(C,A)$ in $\finw$ we have $\finecax$ non-$\mu_L$-stable, then by Proposition \ref{non-stable-result}, we have $\dim\finw\leq g-1+d-k$. If for general $(C,A)$ in $\finw$ we have $\finecax$ $\mu_L$-stable, then we have the same bound by Propositions \ref{H1Eleq1} and \ref{stable-result}.

Since $\dim|L|=g-1$, the result follows.
\end{proof}

\section{Example of curves on Enriques surfaces with an infinite number of $g^1_{\gon(C)}$'s}\label{example-section}

We here present an example of curves with an infinite number of $g^1_{\gon(C)}$'s.

\begin{Eksempel}\label{non-linear}
Let $S$ be any Enriques surface (which is possibly nodal). Let $L=n(E_1+E_2)$ for $n\geq 3$, where $E_1.E_2=2$, in which case, $k=\mu(L)$ by \cite[Corollary 1.5 (a)]{KL09}. Then there exists a sub-linear system $\mathfrak{d}\subseteq |L|$ of smooth curves such that for general $C\in \mathfrak{d}$, there exist infinitely many $g^1_{\gon(C)}$'s.

Indeed, let $B=\fino_S(E_1+E_2)$, consider the map $f_B:S\rightarrow \tykkp^2$, and let $\mathfrak{d}=f^{\ast}|\fino_{\tykkp^2}(n)|$. This is then a sub-linear system of $|L|$, consisting of all curves that map 4--1 onto curves of $|\fino_{\tykkp^2}(n)|$. By Bertini's theorem, since this linear system is base-point free, the generic elements are smooth.

One constructs infinitely many $g^1_{B.L-4}$'s on a generic smooth curve $C\in\mathfrak{d}$ in the following way: Let $C=f^{-1}(C')$, where $C'$ is smooth in $\fino_{\tykkp^2}(n)$. Then $C$ is also smooth. We let the $g^1_{B.L-4}$'s be $f|_C^{\ast}(\fino_{C'}(1)\otimes\fino_{C'}(-P))$, where $P$ is any point on $C'$. (On $C$, this is the same as subtracting one point $Q$ on $B|_C$ and noting that $|B|_C-Q|$ has three base-points $f^{-1}(f(Q))-Q$ that can also be subtracted.)

By \cite[Corollary 1.6]{KL09}, the minimal gonality is always at most $2$ less than the generic gonality, and the generic gonality is given by $B.L-2$ by \cite[Corollary 1.5]{KL09}, so in our case, it follows that $\gon(C)=B.L-4$. Since $n\geq 3$, we are ensured that the $g^1_{B.L-4}$'s are distinct.

\bigskip
These $g^1_{B.L-4}$'s are, as far as we know, new examples of curves $C$ with infinitely many $g^1_{\gon(C)}$'s. The curves are furthermore non-exceptional.

These curves $C$ are 4--1 coverings of plane curves, and the $g^1_{\gon(C)}$'s are induced from the $g^1_{\gon(C')}$'s. According to the Castelnuovo--Severi inequality (see e.g.\ \cite{Kani}), whenever we have an $m$--1 covering from a curve $C$ to a curve $C'$, if $g(C)>mg(C')+(m-1)(d-1)$, then any base-point free $g^1_d$ on $C$ is induced by a base-point free linear system on $C'$. In particular, if $d=\gon(C)$ and $C'$ has infitely many $g^1_{\gon(C')}$'s, then $C$ also has infinitely many $g^1_{\gon(C)}$'s. However, in this example, $g(C)\leq mg(C')+(m-1)(d-1)$.

Furthermore, by \cite[Corollary 2.3.1]{C-M}, any exceptional curve $C$ has infinitely many $g^1_{\gon(C)}$'s. However, by \cite[Theorem 1.1]{KL13}, the only exceptional curves $C$ on Enriques surfaces are isomorphic to smooth plane quintics and satisfy $C^2=10$. It follows that the curves in our example are non-exceptional.
\end{Eksempel}

\bibliographystyle{amsalpha}
\bibliography{NHR}

\providecommand{\bysame}{\leavevmode\hbox to3em{\hrulefill}\thinspace}
\providecommand{\MR}{\relax\ifhmode\unskip\space\fi MR }
% \MRhref is called by the amsart/book/proc definition of \MR.
\providecommand{\MRhref}[2]{%
  \href{http://www.ams.org/mathscinet-getitem?mr=#1}{#2}
}
\providecommand{\href}[2]{#2}
\begin{thebibliography}{ACGH85}

\bibitem[ACGH85]{arbarello}
A.~Arbarello, M.~Cornalba, P.A. Griffiths, and J.~Harris, \emph{Geometry of
  algebraic curves}, vol.~1, Springer Verlag, 1985.

\bibitem[AF11]{Aprodu-Farkas}
M.~Aprodu and G.~Farkas, \emph{{Green's conjecture for curves on arbitrary K3
  surfaces}}, Compositio Math \textbf{147} (2011), 839--851.

\bibitem[Apr05]{Aprodu}
M.~Aprodu, \emph{Remarks on syzygies of $d$-gonal curves}, Math.\ Res.\ Lett.
  \textbf{12} (2005), 387--400.

\bibitem[CM91]{C-M}
M.~Coppens and G.~Martens, \emph{Secant spaces and {C}lifford's theorem},
  Compositio {M}athematica \textbf{78} (1991), 193--212.

\bibitem[CP95]{ciliberto}
C.~Ciliberto and G.~Pareschi, \emph{Pencils of minimal degree on curves on a
  {K}3 surface}, Journal für die reine und angewandte {M}athematik \textbf{460}
  (1995), 14--36.

\bibitem[Fri98]{Friedman}
R.~Friedman, \emph{Algebraic surfaces and holomorphic vector bundles}, Springer
  Verlag, 1998.

\bibitem[GH80]{Griffiths}
P.~Griffiths and J.~Harris, \emph{{The dimension of the variety of special
  linear systems on a general curve}}, Duke Math. J \textbf{47} (1980),
  233--272.

\bibitem[GL87]{G-L}
M.~Green and R.~Lazarsfeld, \emph{{Special divisors on curves on a K3
  surface}}, Inventiones mathematicae \textbf{89} (1987), no.~2, 357--370.

\bibitem[Gre84]{Green}
M.~Green, \emph{Koszul cohomology and the geometry of projective varieties
  (appendix by {M}.\ {G}reen and {R}.\ {L}azarsfeld)}, Journal of differential
  geometry \textbf{6} (1984), 125--171.

\bibitem[Kan84]{Kani}
E.~Kani, \emph{On {C}astelnuovo's equivalence defect}, Journal für die reine
  und angewandte Mathematik \textbf{352} (1984), 24--70.

\bibitem[Kem71]{Kempf}
G.~Kempf, \emph{{Schubert methods with an application to algebraic curves}},
  Stichting Mathematisch Centrum, 1971.

\bibitem[Kim06]{kim2006stable}
H.~Kim, \emph{{Stable vector bundles of rank two on Enriques surfaces}}, J.
  Korean Math. Soc \textbf{43} (2006), no.~4, 765--782.

\bibitem[KL72]{Laksov}
S.L. Kleiman and D.~Laksov, \emph{{On the existence of special divisors}},
  American Journal of Mathematics \textbf{94} (1972), no.~2, 431--436.

\bibitem[KL07]{knutsen2007sharp}
A.L. Knutsen and A.F. Lopez, \emph{{A sharp vanishing theorem for line bundles
  on K3 or Enriques surfaces}}, Proceedings of the American Mathematical
  Society \textbf{135} (2007), no.~11, 3495--3498.

\bibitem[KL09]{KL09}
\bysame, \emph{{Brill-Noether theory for curves on Enriques surfaces, I: the
  positive cone and gonality}}, Mathematische Zeitschrift \textbf{261} (2009),
  659--690.

\bibitem[KL13]{KL13}
\bysame, \emph{{Brill-Noether theory of curves on Enriques surfaces, II. The
  Clifford index}}, arXiv preprint arXiv:1308.1074 (2013).

\bibitem[Knu01]{knutsen-2001}
A.L. Knutsen, \emph{{On kth-order embeddings of K3 surfaces and Enriques
  surfaces}}, manuscripta mathematica \textbf{104} (2001), no.~2, 211--237.

\bibitem[Knu03]{knutsen-2003}
\bysame, \emph{{Gonality and Clifford index of curves on K3 surfaces}}, Archiv
  der Mathematik \textbf{80} (2003), no.~3, 235--238.

\bibitem[Knu09a]{knutsen2009secant}
\bysame, \emph{{On secant spaces to Enriques surfaces}}, Bulletin of the
  Belgian Mathematical Society-Simon Stevin \textbf{16} (2009), no.~5,
  907--931.

\bibitem[Knu09b]{knutsen}
\bysame, \emph{On two conjectures for curves on {K}3 surfaces}, International
  Journal of Mathematics \textbf{20} (2009), 1547--1560.

\bibitem[Laz86]{Lazarsfeld}
R.~Lazarsfeld, \emph{Brill--{N}oether--{P}etri without degenerations}, J.\
  Diff.\ Geom. \textbf{23} (1986), 299--307.

\bibitem[LC12]{lelli2012green}
M.~Lelli-Chiesa, \emph{Green's conjecture for curves on rational surfaces with
  an anticanonical pencil}, Mathematische Zeitschrift (2012), 1--12.

\bibitem[Mar68]{Martens-1968}
H.H. Martens, \emph{{On the varieties of special divisors on a curve. II}},
  Journal f{\"u}r die reine und angewandte Mathematik (Crelles Journal)
  \textbf{1968} (1968), no.~233, 89--100.

\bibitem[Mar84]{Martens}
G.~Martens, \emph{{On dimension theorems of the varieties of special divisors
  on a curve}}, Mathematische Annalen \textbf{267} (1984), no.~2, 279--288.

\bibitem[Par95]{Pareschi}
G.~Pareschi, \emph{A proof of {L}azarsfeld's theorem on curves on {K}3
  surfaces}, J.\ Alg.\ Geom. \textbf{4} (1995), 195--200.

\bibitem[Tyu87]{Tyurin}
A.N. Tyurin, \emph{{Cycles, curves and vector bundles on an algebraic
  surface}}, Duke Math. J \textbf{54} (1987), no.~1, 1--26.

\end{thebibliography}

\bigskip

\author{Nils Henry Rasmussen and Shengtian Zhou}

\email{nils.h.rasmussen@hit.no, shengtian.zhou@hit.no}

\address{Telemark University College, Dept. of Teacher Education}

\address{L\ae rerskolevegen 40, 3679 NOTODDEN, Norway}
\end{document}